\documentclass[11pt,reqno,a4paper]{amsart}

\usepackage{amsmath,amssymb,amsthm,mathtools}
\usepackage{enumitem}
\usepackage{ragged2e}
\usepackage{microtype}
\usepackage{mathrsfs}
\usepackage{aliascnt}
\usepackage{hyperref}
\usepackage{fullpage}
\usepackage[nameinlink,capitalise,noabbrev]{cleveref}
\usepackage{lineno}
\usepackage{setspace}
\allowdisplaybreaks
\setlist{itemsep=2pt,topsep=4pt}
\numberwithin{equation}{section}

\hypersetup{
  colorlinks=true,
  linkcolor=blue,
  citecolor=blue,
  urlcolor=blue,
  pdftitle={Fractional Stability and Exact Minimum d-Degree Thresholds for Hypergraph Perfect Matchings},
  pdfsubject={Perfect matchings in uniform hypergraphs},
  pdfkeywords={hypergraph matching, minimum d-degree, fractional matching, space barrier, divisibility barrier, stability}
}

\newtheorem{theorem}{Theorem}[section]
\newaliascnt{lemma}{theorem}
\newtheorem{lemma}[lemma]{Lemma}
\aliascntresetthe{lemma}
\newaliascnt{proposition}{theorem}

\aliascntresetthe{proposition}
\newaliascnt{corollary}{theorem}
\newtheorem{claim}{Claim}

\aliascntresetthe{corollary}
\theoremstyle{definition}
\newaliascnt{definition}{theorem}
\newtheorem{definition}[definition]{Definition}
\aliascntresetthe{definition}
\theoremstyle{remark}
\newaliascnt{remark}{theorem}

\aliascntresetthe{remark}

\newcommand{\eps}{\varepsilon}

\newcommand{\ddiv}{\delta_{\mathrm{div}}}

\title[Exact minimum $d$-degree thresholds]
{Exact Minimum $d$-Degree Thresholds for Hypergraph Perfect Matchings}

\author{Jie Han}
\address{JH and BW. School of Mathematics and Statistics, Beijing Institute of Technology, China\\
Email: \texttt{(JH) han.jie@bit.edu.cn, (BW) bin.wang@bit.edu.cn.}}
\author{Hongliang Lu}
\address{HL and FY. School of Mathematics and Statistics
Xi'an Jiaotong University, Xi'an, China\\
Email:\texttt{(HL) luhongliang215@sina.com, (FY) fhyuan1@gmail.com.}}
\author{Bin Wang}
\author{Feihong Yuan}
\date{}

\subjclass[2020]{05C65, 05C70, 05D05, 60E15}
\keywords{perfect matching, uniform hypergraph, minimum $d$-degree, fractional matching, space barrier, divisibility barrier, stability}

\begin{document}

\begin{abstract}
For fixed integers $k\ge3$ and $1\le d\le k-1$ and sufficiently large $n\in k\mathbb N$, we establish the sharp minimum $d$-degree thresholds that forces perfect matching in every $n$-vertex $k$-uniform hypergraphs.
This was conjectued by Treglown and Zhao, and the $d=1$ case was conjectued by K\"uhn, Osthus and Treglown.
\end{abstract}

\maketitle

\section{Introduction}\label{sec:introduction}

Matchings are fundamental objects in graphs and hypergraphs, with various applications and connections with many fields of mathematics, computer science, and other branches of natural and social science.
Graph matchings are well understood -- Tutte's condition \cite{Tutte47} completely characterize the graphs with perfect matchings, and Edmonds' algorithm \cite{Edmonds65} finds a maximum matching in any graph in polynomial time.
In contrast, hypergraph matchings remain a less understood subject, and indeed the hypergraph perfect matching problem is one of the 21 NP-complete problems of Karp \cite{Karp72} in 1972.
In this paper we are interested in the extremal aspect of perfect matching in hypergraphs, that is, the minimum degree conditions.
The celebrated Dirac's theorem says that every $n$-vertex graph with minimum degree $n/2$ contains a spanning cycle; if $n$ is even, this gives a perfect matching.
Extending this result to hypergraphs has been the main focus of the Dirac-type problems for the past two decades.

\subsection{Minimum degrees and the two barriers}

Let $H$ be a $k$-uniform hypergraph, or briefly a $k$-graph.  
A \emph{matching} in
$H$ is a family of pairwise disjoint edges, and it is \emph{perfect} if it
covers every vertex. 
Let $\nu(H)$ denote the maximum size of a matching in $H$.
For a set $D\in\binom{V(H)}d$, write
$\deg_H(D):=\bigl|\{e\in E(H):D\subseteq e\}\bigr|$, $\delta_d(H):=\min_{D\in\binom{V(H)}d}\deg_H(D)$.
When $n\in k\mathbb N$, let $m_d(k,n)$ denote the least integer $m$ such that every
$n$-vertex $k$-graph $H$ with $\delta_d(H)\ge m$ contains a perfect matching.
In this paper we concern the behavior of $m_d(k,n)$ when $n$ is sufficiently large with respect to $k$.
The threshold is governed by two different obstructions.

The first is the \emph{space barrier}.  Choose a set $A$ of size $n/k-1$ and
let $\mathcal S(A):=\{e\in\binom{V}{k}:e\cap A\ne\varnothing\}$.
Every matching in $\mathcal S(A)$ has at most $|A|$ edges.  A $d$-set
contained in $V\setminus A$ has the smallest degree, so
$s_d(k,n):=\delta_d(\mathcal S(A))
 =\binom{n-d}{k-d}-\binom{n-d-n/k+1}{k-d}$.
 Its asymptotic density is
$c_{k,d}^{\mathrm{sp}}
 :=1-\left(1-\frac1k\right)^{k-d}$.
The second obstruction is a parity, or \emph{divisibility barrier}.
Given a partition $V=A\dot\cup B$ and $q\in\{0,1\}$, define $\mathcal P(A,q):=
 \{e\in\binom Vk:|e\cap A|\equiv q\pmod2\}$.
If $|A|\not\equiv qn/k\pmod2$, then $\mathcal P(A,q)$ has no perfect
matching.  
Let
$\ddiv(n,k,d):=\max\bigl\{\delta_d(\mathcal P(A,q)):
 A\subseteq V,\ q\in\{0,1\},\ |A|\not\equiv q(n/k)\pmod2\bigr\}$.
Consequently,
\begin{equation}\label{eq:two-lower-bounds}
 m_d(k,n)\ge\max\{s_d(k,n),\ddiv(n,k,d)\}+1.
\end{equation}
For fixed $k$ and $d$,
\begin{equation}\label{eq:barrier-asymptotics}
 \frac{s_d(k,n)}{\binom{n-d}{k-d}}
 =c_{k,d}^{\mathrm{sp}}+o(1),
 \quad \text{ and } \quad
 \frac{\ddiv(n,k,d)}{\binom{n-d}{k-d}}
 =\frac12+o(1).
\end{equation}
The two limiting densities never coincide.  Indeed, with $r=k-d$, the
identity $c_{k,d}^{\mathrm{sp}}=1/2$ would imply
$2(k-1)^r=k^r$; since $\gcd(k,k-1)=1$, this would force $2\in k^r\mathbb N$, which is
impossible for $k\ge3$. 
Thus every pair $(k,d)$ lies in exactly one of the
space-dominant and divisibility-dominant regimes.

Our main theorem proves that the lower bound in
\eqref{eq:two-lower-bounds} is always exact, which resolves a conjecture of Treglown and Zhao \cite{TZ16} (the $d=1$ case was conjectured earlier by K\"uhn, Osthus and Treglown \cite{KOT13}).

\begin{theorem}[Exact minimum $d$-degree threshold]\label{thm:exact-threshold}
Fix $k\ge3$ and $d\in[k-1]$.  For every sufficiently large
$n\in k\mathbb N$,
$m_d(k,n)=\max\{s_d(k,n),\ddiv(n,k,d)\}+1$.
\end{theorem}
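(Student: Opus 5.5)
The lower bound is \eqref{eq:two-lower-bounds}, so only the upper bound needs proof. Let $H$ be an $n$-vertex $k$-graph with $\delta_d(H)\ge\max\{s_d(k,n),\ddiv(n,k,d)\}+1$. We show that $H$ has a perfect matching by combining the absorbing method with a stability analysis. Fix a small $\gamma>0$ and split into two cases: $H$ is \emph{$\gamma$-close} to one of the extremal configurations, or it is not. Here $\gamma$-close means that $H$ is within $\gamma n^k$ edges of some $\mathcal S(A)$ with $|A|\approx n/k$, or of some $\mathcal P(A,q)$ with $|A|\approx n/2$.

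\emph{Non-extremal case.} We use the lattice-based absorbing method of Han (and Keevash--Mycroft). Since $\delta_d(H)\ge(\min\{c^{\mathrm{sp}}_{k,d},1/2\}-o(1))\binom{n-d}{k-d}$, the vertex set splits into a bounded number of closed parts. The lattice generated by edge-vectors is full unless $H$ is close to a parity construction. Since the degree exceeds the $1/2$-density asymptotically only in the divisibility-dominant regime, in the space-dominant regime we must rule out a nontrivial lattice directly. If $H$ is far from $\mathcal P(A,q)$, every partition has a ``crossing'' edge, which gives a small absorbing set $M_0$. We then need an almost perfect matching of $H-V(M_0)$. For this we use a \emph{fractional stability} statement: if $H$ is not $\gamma$-close to $\mathcal S(A)$, then $H$ has a perfect fractional matching even after deleting a few vertices. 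We derive it from Farkas duality, because a fractional cover of weight below $n/k$ forces a set $A$ of roughly $n/k$ vertices meeting almost all edges. Regularity or random greedy rounding then converts the fractional matching into a matching covering all but $o(n)$ vertices, and $M_0$ absorbs the rest.

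\emph{Extremal cases.} If $H$ is close to $\mathcal S(A)$, we move the few atypical vertices, namely those of low degree into $V\setminus A$-type sets, so that $|A|\le n/k$ and almost every $k$-set meeting $A$ in one vertex is an edge. The exact bound $\delta_d(H)>s_d(k,n)$ forces a matching of size $n/k-|A|+1$ more than the space barrier allows, lying inside $V\setminus A$. Equivalently, $H[V\setminus A]$ contains enough disjoint edges to fix the count. We cover the bad vertices greedily using their large degrees, and finish by matching each remaining vertex of $A$ with $k-1$ vertices of $B$, using a Hall-type or regularity argument in the almost complete ``one-in-$A$'' structure. If $H$ is close to $\mathcal P(A,q)$, we similarly reassign atypical vertices. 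The bound $\delta_d(H)>\ddiv$ then guarantees an edge of the wrong parity, or two disjoint such edges, which corrects the parity of $|A|$. Afterwards the remainder is almost complete within each parity class and is matched greedily with absorption.

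The main obstacle is the exact extremal analysis near $\mathcal S(A)$ for general $d$. The computation $\binom{n-d}{k-d}-\binom{n-d-n/k+1}{k-d}$ must be matched exactly, and for small $d$ a single extra unit of degree only yields edges in $V\setminus A$ in a weak averaged sense. We would first handle this by counting: sum the degrees over $d$-sets inside $V\setminus A$ to obtain an edge there. A Frankl-type (Erd\H{o}s matching conjecture) argument then upgrades this to the needed number of disjoint edges, after a careful count of how many vertices were misclassified.
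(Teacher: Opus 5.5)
\textbf{There is a genuine gap at the heart of your non-extremal case.} You say that the fractional stability statement (a non-space-extremal $H$ with $\delta_d(H)\gtrsim c^{\mathrm{sp}}_{k,d}\binom{n-d}{k-d}$ has a perfect fractional matching, robustly under deleting a few vertices) follows ``from Farkas duality, because a fractional cover of weight below $n/k$ forces a set $A$ of roughly $n/k$ vertices meeting almost all edges.'' LP duality only tells you that such a cover exists. It says nothing about the cover being close to the indicator of a set of size $n/k$.

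Showing that the degree condition forces this structure is a stability form of the Alon--Frankl--Huang--R\"odl--Ruci\'nski--Sudakov conjecture $c^*_{k,d}=c^{\mathrm{sp}}_{k,d}$. That conjecture was open for over a decade and was settled only through the Feige-type inequality of \cite{FHWYZZ26,NW26} together with Ferber--Jain \cite{FJ19}. The stability version you need is the Cao--Liu--Zhang theorem (Theorem~\ref{CLZ}). The paper applies it to the link $(k-d)$-graph $G=N_{H_1}(D)$ of the $d$ weakest vertices of a monotone $k$-graph (Lemma~\ref{main-frac}). It passes to the monotone closure $cl(H_s)$ defined by a minimum fractional cover. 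It then obtains an almost perfect matching from $n/\ln n$ edge-disjoint perfect fractional matchings with bounded pair loads, via Frankl--R\"odl (Lemma~\ref{lem:almost-perfect}).

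Without this input your argument fails in both regimes:
\begin{itemize}
\item In the space-dominant regime you have no almost perfect matching at density $c^{\mathrm{sp}}_{k,d}$ for non-extremal $H$.
\item In the divisibility-dominant regime you cannot even show that density about $1/2>c^{\mathrm{sp}}_{k,d}$ gives an almost perfect matching. The paper gets this by combining Theorem~\ref{cor:fractional-threshold} with \cite[Theorem~2]{TZ16}.
\end{itemize}

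\textbf{Your regime logic for absorbing is reversed.} In the space-dominant regime, $\delta_d(H)\ge(\frac12+\Delta)\binom{n-d}{k-d}$ for a constant $\Delta>0$. So the H\`an--Person--Schacht absorbing lemma applies directly, and neither a lattice analysis nor a parity-extremal case is needed. The degree sits only at about $1/2$ in the divisibility-dominant regime, and that is where the lattice/parity analysis belongs; the paper handles that regime by citing Treglown--Zhao rather than redoing it.

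\textbf{Your extremal analysis near $\mathcal S(A)$ is only a sketch.} Averaging degrees over $d$-sets in $V\setminus A$ and then a Frankl-type upgrade to many disjoint edges is, as you say yourself, the main obstacle, and it is left unproved. The paper avoids it with a short counting argument. For every $D\in\binom Ud$, the exact bound $\delta_d(H)>s_d(k,n)$ gives that the number of missing edges of $H_k^{k-d}(U,W)$ through $D$ is at most $\deg_{H[U]}(D)$. Summing gives $|H_k^{k-d}(U,W)\setminus H|\le\binom kd e_H(U)$, which is small because $U$ is the sparse set plus one vertex. The Lu--Yu--Yuan result (Lemma~\ref{lem:LYY-close}) then yields the perfect matching.
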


Indeed, the divisibility-dominant range is already handled by Theorem 2 of Treglown and Zhao~\cite{TZ16}, and our work focus on the space-dominant case, e.g., $c_{k,d}^{\mathrm{sp}} = 1-(1-1/k)^{k-d} > 1/2$.

\subsection{Fractional stability and the space-dominant theorem}
The following fractional relaxation has been proven to be closely related to the integer matching problem, and is also essential to us.
A \emph{fractional matching} in $H$ is a function
$\varphi:E(H)\to[0,1]$ such that $\sum_{e\ni v}\varphi(e)\le1$ for every vertex $v$.
Its \emph{weight} is $\sum_{e\in E(H)}\varphi(e)$, and it is \emph{perfect} if its weight is
$|V(H)|/k$.  
The \emph{fractional matching number} of $H$ is $\nu^*(H):=
\max\{
\sum_{e\in E(H)}\varphi(e):
\varphi \text{ is a fractional matching of }H\}$.
A \emph{fractional vertex cover} is a function
$w:V(H)\to[0,1]$ such that $\sum_{v\in e}w(v)\ge1$ for every edge $e$.
The \emph{fractional vertex-cover number} of $H$ is $\tau^*(H):=
\min\{
\sum_{v\in V(H)}w(v):
w \text{ is a fractional vertex cover of }H\}$.
Linear programming duality gives $\nu^*(H)=\tau^*(H)$.

Let $m_d^*(k,N)$ be the least minimum $d$-degree that forces a fractional
perfect matching in every $N$-vertex $k$-graph, and define
\begin{equation}\label{eq:fractional-constant}
 c^*_{k,d}:=
 \limsup_{N\to\infty}
 \frac{m_d^*(k,N)}{\binom{N-d}{k-d}}.
\end{equation}
The space construction gives $c^*_{k,d}\ge c_{k,d}^{\mathrm{sp}}$.
Alon, Frankl, Huang, R\"odl, Ruci\'nski and Sudakov~\cite{AFHRRS}
conjectured equality holds for every fixed pair $(k,d)$. 
This conjecture is now established via its beautiful connection to Feige's conjecture in probability theory.
We only state a specialized form sufficient for our purpose.
Let $X_1,\ldots,X_\ell$ be independent, non-negative random variables, each of which has mean $1$. Then the conjecture says that for all $d\ge 1$,
\[
\Theta^d(\ell)=\sup\Pr[X_1+\cdots+X_\ell\ge \ell+d]
=
1-\left(1-\frac{1}{\ell+d}\right)^\ell.
\]
This is verified very recently by Fu, Han, Wang, Yan, Zhang and Zhou~\cite{FHWYZZ26} and independently by Nie and Wei \cite{NW26}.
Then Theorem 1.14 of Ferber and Jain \cite{FJ19} says that $c_{k,d}^*=\Theta^d(k-d) = 1-\left(1-\frac{1}{k}\right)^{k-d}$.



\begin{theorem}[Exact fractional threshold, \cite{FHWYZZ26, NW26}+\cite{FJ19}]\label{cor:fractional-threshold}
Fix $k\ge3$ and $d\in[k-1]$, $c^*_{k,d}=c_{k,d}^{\mathrm{sp}}$.
\end{theorem}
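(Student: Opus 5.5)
This is essentially a combination of two cited results, so I would prove it by chaining them. The lower bound $c^*_{k,d}\ge c^{\mathrm{sp}}_{k,d}$ comes from the space barrier. Take $N$ large and $A$ with $|A|=\lceil N/k\rceil-1$, and let $\mathcal S(A)$ be as defined above. Weighting each vertex of $A$ by $1$ and all others by $0$ gives a fractional vertex cover of weight $|A|<N/k$. By LP duality $\nu^*(\mathcal S(A))=\tau^*(\mathcal S(A))<N/k$, so $\mathcal S(A)$ has no fractional perfect matching, and therefore $m_d^*(k,N)>\delta_d(\mathcal S(A))$. The computation behind \eqref{eq:barrier-asymptotics} applies with $N/k$ replaced by $\lceil N/k\rceil$, since that changes only lower-order terms. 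It gives $\delta_d(\mathcal S(A))/\binom{N-d}{k-d}\to 1-(1-1/k)^{k-d}$. Taking $\limsup$ yields $c^*_{k,d}\ge c^{\mathrm{sp}}_{k,d}$.

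For the upper bound I would invoke Theorem~1.14 of Ferber and Jain~\cite{FJ19}. In the form I recall, it gives $c^*_{k,d}\le \Theta^d(k-d)$, with $\Theta^d(\ell)$ the extremal Feige-type tail probability defined above. In outline, their argument works as follows. A $d$-set of large degree but no fractional perfect matching yields, by duality, a fractional cover $w$ with $\sum_v w(v)<N/k$. Rescaling $w$ by $k$ gives random variables of mean about $1$ on the $k-d$ free vertices of an edge. The cover condition then says that every edge containing a fixed $d$-set $D$ has $(k-d)$-part of total weight at least $k-\sum_{v\in D}kw(v)$. Averaging over $D$ and applying a concentration/tail bound for sums of $\ell=k-d$ independent mean-$1$ nonnegative variables bounds the density of such edges by $\Theta^d(k-d)+o(1)$. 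So any $H$ with $\delta_d(H)\ge(\Theta^d(k-d)+\eps)\binom{N-d}{k-d}$ has a fractional perfect matching for $N$ large.

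Next I substitute the resolved conjecture of \cite{FHWYZZ26,NW26}:
\[
\Theta^d(k-d)=1-\Bigl(1-\frac{1}{(k-d)+d}\Bigr)^{k-d}=1-\Bigl(1-\frac1k\Bigr)^{k-d}=c^{\mathrm{sp}}_{k,d}.
\]
This matches the lower bound, which gives the statement.

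The only delicate point is checking the form of Ferber--Jain's theorem. Their reduction needs the probabilistic statement for every $d\ge1$ with $\ell=k-d$, possibly only in the range $\ell\ge1$. It may also need the supremum to be over exactly the class of variables stated: independent, nonnegative, mean $1$. I would verify that the specialized form quoted above covers every pair $(\ell,d)=(k-d,d)$ with $k\ge3$ and $d\in[k-1]$. Apart from this bookkeeping and the routine asymptotic computation for the space barrier, no new argument is needed.
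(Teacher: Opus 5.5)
Your proposal is correct and follows the same route as the paper. The space construction, via the fractional cover of weight $|A|<N/k$, gives $c^*_{k,d}\ge c^{\mathrm{sp}}_{k,d}$, and Theorem~1.14 of Ferber--Jain combined with the resolved Feige-type conjecture gives $c^*_{k,d}\le\Theta^d(k-d)=1-(1-1/k)^{k-d}$. Your informal sketch of the Ferber--Jain reduction is loose, since the bound comes from the specific $d$-set of smallest cover weight after shifting the weights, not from averaging over $d$-sets; but that step is a black-box citation, exactly as in the paper, so it does not affect the argument.
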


In our proof we indeed need a stability version of this result, which, fortunately, is established by Cao, Liu and Zhang \cite{CLZ26}, again, very recently (we do use the statement of Theorem \ref{cor:fractional-threshold} for numerical use).
The second new component treats the range in which the space barrier has
limiting density greater than $1/2$.

\begin{theorem}[Space-dominant exact threshold]\label{thm:space-dominant}
Fix $k\ge3$ and $d\in[k-1]$, and suppose that $1-(1-1/k)^{k-d}>\frac12$.
Then, for every sufficiently large $n\in k\mathbb N$, $m_d(k,n)=s_d(k,n)+1
 =\binom{n-d}{k-d}-
  \binom{n-d-n/k+1}{k-d}+1$.
\end{theorem}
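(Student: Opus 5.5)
The lower bound $m_d(k,n)\ge s_d(k,n)+1$ is the space construction $\mathcal S(A)$ from \eqref{eq:two-lower-bounds}. For the upper bound we take $H$ on $n$ vertices with $\delta_d(H)\ge s_d(k,n)+1$ and argue by the usual extremal/non-extremal dichotomy. Fix constants $1/n\ll\gamma\ll\eps\ll 1/k$. Call $H$ \emph{$\eps$-close to the space barrier} if there is $A\subseteq V(H)$ with $|A|=n/k$ such that all but at most $\eps n^k$ edges of $H$ meet $A$, or, equivalently, if $H[V\setminus A]$ has at most $\eps n^k$ edges.

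\textbf{Non-extremal case.} Here we use the absorbing method together with the fractional stability theorem of Cao, Liu and Zhang \cite{CLZ26}. Since $c^{\mathrm{sp}}_{k,d}>1/2$, the hypothesis $\delta_d(H)\ge (c^{\mathrm{sp}}_{k,d}-\gamma)\binom{n-d}{k-d}$ is above the divisibility threshold. In this range the lattice-based absorbing lemma (as in Han's work, or Treglown--Zhao \cite{TZ16}) applies, because every pair of vertices is reachable: the minimum $d$-degree exceeds $(1/2+\Omega(1))\binom{n-d}{k-d}$, so the lattice is trivial. This gives a small absorbing matching $M_{\mathrm{abs}}$ that can absorb any set of $o(n)$ vertices of size divisible by $k$.

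Delete $V(M_{\mathrm{abs}})$. The remaining hypergraph $H'$ still has $\delta_d(H')\ge (c^*_{k,d}-2\gamma)\binom{n'-d}{k-d}$ by Theorem \ref{cor:fractional-threshold}, and it is still far from the space barrier. Fractional stability then gives a fractional matching of weight at least $(1-\gamma')n'/k$, and in fact a perfect fractional matching in a regularity/random-reduced sense. Standard conversion, either via the weak regularity lemma or via Rödl-nibble/Pippenger-type results applied to a random subgraph where the fractional matching is almost uniform, produces an integer matching covering all but $\gamma' n$ vertices. Absorbing the leftover vertices completes the perfect matching. The only delicate point is showing that "far from $\mathcal S(A)$" is inherited by $H'$ and by the reduced hypergraph. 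This holds because closeness is measured with an $\eps$ slack that dominates the changes caused by $\gamma$.

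\textbf{Extremal case.} This is the main obstacle, since the exact bound is needed. Given $A$ with $|V\setminus A|$ nearly edge-free, we first adjust the partition. Let $B=V\setminus A$. Move into $A$ the vertices of $B$ that lie in many ($\ge \eps^{1/2} n^{k-1}$) edges of $H[B]$. Then move to the "bad" side the vertices of $A$ with atypically low degree into $\binom{B}{k-1}$; there are few of these. Greedily cover the bad vertices by disjoint edges using the degree condition, and reduce to the setting where $|A'|$ and $|B'|$ satisfy $|B'|=(k-1)|A'|+r$ and every vertex of $A'$ is typical.

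The exact degree condition is used to find $r'$ disjoint edges inside $B$ (or edges with at least $2$ vertices in $B$), where $r'$ is the excess of $B$. Indeed, if $H$ had fewer such disjoint edges, then for some small vertex set $C$ of size less than $r'$ all edges with $\ge2$ vertices in $B$ would meet $C$. Counting the degree of a $d$-set in $B\setminus C$ then gives $\deg\le s_d(k,n)$, a contradiction. This calculation is the delicate one, including the case $r'=1$, where we need a single edge in $B$, and it is exactly where "$+1$" enters. After removing these edges, the remainder splits as $|B''|=(k-1)|A''|$ with all $A''$-vertices typical, and the $B$-vertices have high degree into $A''\times\binom{B''}{k-2}$. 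We then find a perfect matching of the form one vertex from $A''$ and $k-1$ from $B''$ by a random partition of $B''$ into $(k-1)$-sets, followed by Hall's theorem for the bipartite graph between $A''$ and the parts, or by a direct greedy-plus-switching argument.
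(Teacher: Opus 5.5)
Your lower bound and the extremal/non-extremal split match the paper. However, the step where the exact ``$+1$'' is supposed to enter fails as written. You assert that if $H[B]$ has fewer than $r'$ disjoint edges, then some $C$ with $|C|<r'$ meets all of them. Hypergraphs have no K\"onig property: a maximum matching only gives a cover with $|C|\le k(r'-1)$.

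Redo your count with a general cover. For $D\in\binom{B\setminus C}{d}$, every edge through $D$ meets $A\cup C$, so $\deg_H(D)\le\binom{n-d}{k-d}-\binom{n-d-|A|-|C|}{k-d}$. When $|A|=n/k-r'$, this is at most $s_d(k,n)$ only if $|C|\le r'-1$. So for $r'\ge 2$ there is no contradiction.

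What the hypothesis actually gives is $\delta_d(H[B])>\binom{|B|-d}{k-d}-\binom{|B|-d-r'+1}{k-d}$. Deducing $\nu(H[B])\ge r'$ from this is an exact $d$-degree Erd\H{o}s-matching (Bollob\'as--Daykin--Erd\H{o}s type) statement. It is tight when $H[B]$ consists of all edges meeting $r'-1$ fixed vertices. It needs its own proof, for $r'$ not bounded a priori, and it must be run so that the ``$+1$'' is not lost on the vertices you have already moved or covered greedily. This entire analysis is what Lemma~\ref{lem:LYY-close} packages, and the paper never does it. With $|W|=n/k-1$ and $\mathcal T=H_k^{k-d}(U,W)$, it observes $\deg_H(D)=s_d(k,n)-b_D+c_D$ for $D\in\binom Ud$, hence $b_D\le c_D$. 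Summing gives $|\mathcal T\setminus H|\le\binom kd e_H(U)\le\eps_0 n^k$, and the lemma finishes.

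Your non-extremal case has the paper's architecture: an absorber, then an almost perfect matching from fractional matchings plus a nibble. But its key input is asserted rather than derived. You need that a non-extremal $H'$ with $\delta_d(H')\ge(c^{\mathrm{sp}}_{k,d}-\gamma)\binom{n'-d}{k-d}$ has a \emph{perfect} fractional matching. A weight bound $(1-\gamma')n'/k$ controlled only by the degree deficit is useless, since that deficit is at least of the order of the absorber's relative size, while the absorber only handles much smaller leftovers. Theorem~\ref{CLZ} does not say this: it is a stability theorem for the Erd\H{o}s matching problem, about families with $\nu\le m$ and nearly $\binom nk-\binom{n-m}k$ edges.

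Bridging the two is the paper's main new content. Lemma~\ref{main-frac} treats monotone $k$-graphs by taking the link $G$ of the last $d$ vertices, a $(k-d)$-graph with $e(G)=\deg(D)$. A large matching of $G$ lifts to a perfect matching by monotonicity. Otherwise the paper applies Theorem~\ref{CLZ} to $G$ and combines non-extremality (Lemma~\ref{lem:robust-density}) with Lemma~\ref{good}. The proof of Lemma~\ref{lem:almost-perfect} then reduces a general $H_s$ to its monotone closure $cl(H_s)$ with respect to an optimal fractional vertex cover, which has the same $\nu^*$. Without this, or an equivalent argument, your citation of Cao--Liu--Zhang does not deliver the perfect fractional matching you use.
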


For comparison, Theorem~2 of Treglown and Zhao~\cite{TZ16} gives
\[
 m_d(k,n)=
 \max\left\{
 \ddiv(n,k,d)+1,
 \bigl(c^*_{k,d}+o(1)\bigr)\binom{n-d}{k-d}
 \right\},
\]
which solves the parity-dominant branch by Theorem \ref{cor:fractional-threshold}, while 
Theorem \ref{thm:space-dominant} yields the exact
space-dominant branch.  Since the two limiting densities are unequal,
Theorem \ref{thm:exact-threshold} follows.

\subsection{A short history}

The exact codegree threshold was determined by R\"odl, Ruci\'nski and
Szemer\'edi~\cite{RRS09}, built on a few previous partial results.  Treglown and Zhao then settled all parameters
$d\ge k/2$~\cite{TZ12,TZ13} and formulated the general maximum
conjecture~\cite{TZ16}.  On the fractional side, Alon et
al.~\cite{AFHRRS} connected the problem to the Erd\H{o}s matching problem.
Subsequent progress of Han~\cite{Han16}, Frankl and
Kupavskii~\cite{FKup22}, and Lu and Yu~\cite{LuYu22} produced further exact ranges, including the fractional threshold for $d\ge 2k/5$ and the integer matching threshold for $d \ge 3k/8$; more recently, Frankl, Lu, Ma and Wu~\cite{FLMW26} settled the
additional cases $(k,d)=(5,1)$ and $(6,2)$.
The \textit{asymptotic} version of the minimum degree thresholds, that is, 
\[
 m_d(k,n)=
 \max\left\{
 1/2+o(1),
 \bigl(c^*_{k,d}+o(1)\bigr)\binom{n-d}{k-d}
 \right\},
\]
usually attributed to the H\`an--Person--Schacht conjecture and commonly regarded as the most important problem in the subject of hypergraph Dirac-type theory, was indeed implicitly resolved by the recent works of \cite{FHWYZZ26, NW26} and observation of Alon et
al.~\cite{AFHRRS}.
Please allow us omit the closely related (indeed, exciting) developments on partite versions, decision problems, random graph versions, counting versions, random sparsification versions, etc.


\subsection{Proof strategy}


Let us focus on the space-dominant case, as the other case follows from combining existing works.
The following definition is essential to us.
For $B\subseteq V(H)$, write
$e_H(B):=|E(H)\cap\binom Bk|$.

\begin{definition}\label{def:space-extremal}
Let $n\in k\mathbb N$.  
An $n$-vertex $k$-graph $H$ is
\emph{$\eps$-space-extremal} if some set $B\subseteq V(H)$ satisfies $|B|=\left(1-\frac1k\right)n$
and $e_H(B)\le\eps\binom nk$.
\end{definition}

For the space-dominant range, we split the proof into extremal and non-extremal cases. 
In the extremal case, we show that if $H$ is space extremal, a theorem of Lu, Yu and Yuan yields a perfect matching. 
In the non-extremal case, we first obtain an almost perfect matching via edge-disjoint fractional matchings, and a (standard) nibble-type result, and then absorb the uncovered vertices using the absorbing lemma of H\`an, Person and Schacht.

\section{Tools}\label{sec:prelim}

Let $H$ and $H'$ be $k$-graphs on the same vertex set $V$, where $|V|=n$.
In the stability arguments below, we shall compare a given hypergraph with an
appropriate extremal template. We therefore introduce both a global notion of
closeness and its corresponding local vertex version as follows.

\begin{definition}\label{def:close}
Let \(\varepsilon>0\). We say that \(H\) is
\emph{\(\varepsilon\)-contained in \(H'\)} if
$|E(H')\setminus E(H)|
\le \varepsilon n^k$.
\end{definition}

\begin{definition}\label{def:good-bad}
Let \(\alpha>0\). A vertex \(v\in V\) is said to be
\emph{\(\alpha\)-good with respect to \(H'\)} if
$
\bigl|
\{e\in E(H')\setminus E(H):v\in e\}
\bigr|
\le \alpha n^{k-1}$.
Otherwise, \(v\) is said to be
\emph{\(\alpha\)-bad with respect to \(H'\)}.
\end{definition}

We first introduce a key lemma established by Lu, Yu and Yuan~\cite{LYY21}.
If $U\dot\cup W$ is a partition of an $n$-vertex set and $1\le s\le k$, then let
\begin{equation}\label{eq:template}
  H_k^s(U,W):=
 \left\{e\in\binom{U\cup W}{k}:|e\cap W|\in[s]\right\}.
\end{equation}
We may omit the superscript $s$ if $s=k$.
Moreover, for simplicity, in some occasions we do not explicitly define the sets $U, W$ and write $H_k^s(|U|,|W|)$ instead.

The following result shows that any host $k$-graph $h$ close to $ H_k^s(U,W)$ with the sharp minimum $d$-degree condition must have a perfect matching.

\begin{lemma}[Lu--Yu--Yuan,~\cite{LYY21}]\label{good}
	Let $k,l, m,n$ be integers and $\alpha$ be a positive real, such that $k\ge 3$, $l\in [k-1]$,
         $\alpha<(8^{k-1}k^{5(k-1)}k!)^{-1}$,  $n\ge 8k^6$, and $n/(2k^5)\leq m \le  n/k$.
	Suppose that $H$ is a $k$-graph on $n$ vertices and $U,W$ is a partition of
        $V(H)$ with $|W|=m$ such that
	every vertex of $H$ is $\alpha$-good with respect to $H_k^{k-l}(U,W)$.
	Then $\nu (H)\ge m$.
\end{lemma}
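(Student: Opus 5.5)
The plan is a switching argument on a maximum matching whose edges each meet $W$ in exactly one vertex. Such edges lie in $H_k^{k-l}(U,W)$ for every $l\in[k-1]$. Let $M$ be a maximum matching in $H$ consisting of edges $e$ with $|e\cap W|=1$, and suppose for contradiction that $|M|<m$.

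Then some $w\in W$ is uncovered. The uncovered part $X$ of $U$ has size at least $n-m-(k-1)(m-1)\ge k-1$, using $m\le n/k$. I would first record two observations.

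\emph{Large $M$.} If $|M|$ is small, say $|M|\le m/2$, then $X$ has linear size. Goodness of $w$ bounds the number of non-edges $\{w\}\cup S$ with $S\in\binom X{k-1}$ by $\alpha n^{k-1}$, which is much smaller than $\binom{|X|}{k-1}$. So $M$ extends directly, and we may assume $|M|\ge m/2\ge n/(4k^5)$.

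\emph{Counting one flexible edge.} Consider $k$-sets $f=\{w,x\}\cup\{u_2,\dots,u_{k-1}\}$ with $x\in X$ and the $u_j$ taken from $U$-parts of distinct edges $e_2,\dots,e_{k-1}\in M$. There are at least roughly $\bigl((k-1)|M|\bigr)^{k-2}\ge (n/4k^5)^{k-2}$ such sets for each $x$, since any fixed $x\in X$ works. This exceeds $\alpha n^{k-1}/n$ per choice of $x$ after summing over the $|X|$ choices of $x$. Moreover, the set of $(k-1)$-sets obtainable this way has size of order $n^{k-1}$ times a constant depending only on $k$. That constant beats $\alpha$ because $\alpha<(8^{k-1}k^{5(k-1)}k!)^{-1}$; I expect the constant in the hypothesis is tuned exactly for this kind of comparison. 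Hence most such $f$ are edges of $H$.

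The core step is then to repair the damaged edges $e_2,\dots,e_{k-1}$. The vertices
\[
\Bigl(\bigcup_{j}e_j\setminus\{u_j\}\Bigr)\cup\{x_2,\dots,x_{k-1}\},
\]
with further $x_j\in X$, must be repartitioned into $k-2$ edges of $H$, each with exactly one $W$-vertex. This would give a matching of type-$1$ edges of size $|M|+1$, a contradiction.

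The difficulty, and the main obstacle, is that goodness of a single fixed vertex $x_j$ does not control the $O(n)$ specific sets $e_j\setminus\{u_j\}$. All missing edges at $x_j$ could be concentrated on exactly those sets. To get around this, I would choose all the data simultaneously and at random: the edges $e_j\in M$, the removed vertices $u_j$, the new vertices $x_j$, and the bijection pairing freed slots with new vertices. I would also allow the repair edges to exchange $U$-vertices between different $e_j$'s, as in a cyclic rotation. With this freedom, each proposed repair edge is a uniformly random-looking $k$-set of the template type through a fixed good vertex, ranging over $\Omega(n^{k-1})$ possibilities.

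Each repair edge fails with probability $O_k(\alpha)\cdot k^{O(k)}$. A union bound over the $O(k)$ edges in the configuration, namely $f$ together with the $k-2$ repairs, then shows that some configuration succeeds. This requires $\alpha\cdot 8^{k-1}k^{5(k-1)}k!<1$, which is the stated hypothesis. The lower bound $m\ge n/(2k^5)$ is what guarantees $|M|=\Omega(n)$, so that the random choices range over a set of size $\Omega(n^{k-1})$. The bound $n\ge 8k^6$ makes the lower-order terms in the counts negligible.

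If the random rotation still leaves correlated failures, the fallback is to lengthen the chain. Remove one $U$-vertex from each of $t=O(k)$ edges of $M$ and rebuild all of them together with $w$ and $k-1$ uncovered $X$-vertices. Each new edge then contains at least two freely varying vertices, and the goodness bound applies with room to spare.
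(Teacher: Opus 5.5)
Your overall frame is sound: take $M$ maximum among matchings whose edges meet $W$ exactly once, dispose of $|M|<m/2$ by extending directly at $w$, and then find a switch. (The paper does not reprove this lemma; it quotes it from Lu--Yu--Yuan.) The gap is in the switch, which has too few degrees of freedom for goodness to certify it. Your flexible edge $f=\{w,x,u_2,\dots,u_{k-1}\}$ contains two vertices, $w$ and $x\in X$, that do not come from $M$. In the critical case $m=n/k$, $|M|=m-1$, you have $|X|=k-1$. Then there are only $O(n^{k-2})$ candidates $f$, and the sets $f\setminus\{w\}$ do not number order $n^{k-1}$, since they all meet $X$. Goodness of $w$, however, tolerates $\alpha n^{k-1}\gg n^{k-2}$ missing template edges at $w$. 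You also cannot compare against ``$\alpha n^{k-1}/n$ per choice of $x$'': nothing forces the missing edges at $w$ to spread evenly over $X$. Concretely, delete from $H_k^{k-l}(U,W)$ every $k$-set containing $w$ and a vertex of $X$. For large $n$ every vertex stays $\alpha$-good: this costs $w$ at most $(k-1)\binom{n}{k-2}$ edges and any other vertex at most $\binom{n}{k-2}$. Also $M$ stays maximal, because the only possible extension $\{w\}\cup X$ is gone. Yet no set of your form $f$ is an edge. So ``most such $f$ are edges of $H$'' is false, and the argument never starts.

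The repair step has the same deficit, and randomising cannot cure it. Your configuration uses only $k-2$ edges of $M$, so every repair edge through a fixed $x_j$ has its other $k-1$ vertices inside those $k-2$ edges. The whole configuration space therefore has size $O(|X|^{k-1}n^{k-2})$, which is $O(n^{k-2})$ when $|X|=k-1$. No rotation or random bijection changes this, so a union bound against $\alpha n^{k-1}$ missing edges per vertex cannot close. Your fallback criterion (``at least two freely varying vertices'') is also not the right one.

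What is needed is that every new edge contain exactly one uncovered vertex plus one vertex from each of $k-1$ \emph{distinct} edges of $M$. This dictates the switch. Take $k-1$ edges $e_i=\{w_i,u_{i,1},\dots,u_{i,k-1}\}$ of $M$ with $w_i\in W$, together with $w$ and $x_1,\dots,x_{k-1}\in X$. Rebuild them into the $k$ sets $f_0=\{w,u_{1,1},\dots,u_{k-1,1}\}$ and $f_j=\{x_j,w_j\}\cup\{u_{i,\pi_i(j)}:i\ne j\}$ for $j\in[k-1]$, where each $\pi_i$ is a bijection from $[k-1]\setminus\{i\}$ to $\{2,\dots,k-1\}$. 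Each of these sets meets $W$ exactly once. For each fixed uncovered vertex, the map from ordered $(k-1)$-tuples of edges of $M$ to the corresponding new edge is at most $(k-1)!$-to-one. So at most $k\cdot(k-1)!\,\alpha n^{k-1}=k!\,\alpha n^{k-1}$ tuples fail. On the other hand, there are at least $(|M|-k)^{k-1}\ge(n/(8k^5))^{k-1}$ tuples, because $|M|\ge n/(4k^5)$ and $n\ge 8k^6$. The hypothesis $\alpha<(8^{k-1}k^{5(k-1)}k!)^{-1}$ is exactly what makes some tuple succeed. Then $k$ new edges replace $k-1$ edges of $M$, contradicting maximality.
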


\begin{lemma}[Lu--Yu--Yuan,~\cite{LYY21}]\label{lem:LYY-close}
Fix $k\ge3$ and $d\in[k-2]$.
There exist $\eps_0>0$ and $n_0$ such that the following holds for $n\ge n_0$.
Let $m=n/k-1$.
Suppose $H$ is an $n$-vertex $k$-graph and $U,W$ is a partition of $V(H)$ with $|W|=m$ such that $
 \delta_d(H)>
 \binom{n-d}{k-d}-\binom{n-d-m}{k-d}$
and $H$ is $\eps_0$-contained in $ H_k^{k-d}(U,W)$
then $H$ contains a perfect matching.
\end{lemma}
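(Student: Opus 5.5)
We reduce \cref{lem:LYY-close} to \cref{good}, applied with $l=d$ and $m=n/k-1$, and then extend the resulting near-perfect matching. There are three steps: clean up the bad vertices, show the cleaned hypergraph has a perfect matching, and absorb the bad vertices back in.

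\emph{Step 1: set aside the bad vertices.} Choose $\alpha$ small compared to the bound in \cref{good}, and then $\eps_0\ll\alpha$. Call a vertex bad if it is $\alpha$-bad with respect to $H_k^{k-d}(U,W)$. Summing the missing edges over all vertices shows that at most $k\eps_0 n/\alpha=:\beta n$ vertices are bad, with $\beta\ll\alpha$.

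The degree condition gives each bad vertex many usable edges. For every vertex $v$,
\[
\deg_H(v)\ge \delta_d(H)\binom{n-1}{d-1}\Big/\binom{k-1}{d-1},
\]
which is roughly $c^{\mathrm{sp}}_{k,d}\binom{n-1}{k-1}$. Most of these edges are edges of $H_k^{k-d}(U,W)$ whose other vertices are all good. In particular, a bad vertex lying in $U$ that has few edges meeting $W$ in between $1$ and $k-d$ vertices must, by this degree count, have many edges of some other usable type.

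\emph{Step 2: cover the bad vertices greedily.} Treat the bad vertices one at a time. For each bad vertex $v$, pick an edge $e_v\ni v$ whose remaining $k-1$ vertices are good and unused so far. Choose $e_v$ so that the leftover sets $U'=U\setminus\bigcup_v e_v$ and $W'=W\setminus\bigcup_v e_v$ still satisfy $|W'|=|V'|/k-1$, where $V'=U'\cup W'$. This amounts to choosing $e_v$ with $|e_v\cap W|=1$ when possible, i.e.\ using the type of edge that appears in the extremal template.

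Since only $O(\beta n)$ vertices are used, every vertex of $V'$ remains $2\alpha$-good in $H[V']$. Also $|W'|$ stays within the range $[n'/(2k^5),\,n'/k]$ required by \cref{good}, where $n'=|V'|$.

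\emph{Step 3: finish with \cref{good}, plus one extra edge.} Applying \cref{good} to $H[V']$ gives only $\nu(H[V'])\ge |W'|=n'/k-1$, which is one edge short of a perfect matching. Here the sharp degree condition $\delta_d(H)>s_d(k,n)$ has to be used: it rules out the possibility that every edge meets $W$.

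Concretely, I would show that $H$ contains either an edge inside $U$, or an edge meeting $W$ in at least $k-d+1$ vertices. The reason is that a $d$-set $D\subseteq U$ has degree greater than the number of $k$-sets containing $D$ that meet $W$, so it lies in some edge inside $U$. Before Step 2, I would pre-select one such edge $e_0$ made of good vertices (or modify it so that it is), and remove it as well. The sizes then become balanced: $|W''|=|V''|/k$. After that, I would apply \cref{good} with $m=|W''|=n''/k$, which is allowed since $m\le n/k$. This yields a perfect matching of the rest, because the matching guaranteed by \cref{good} has $n''/k$ edges.

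The main obstacle is the bookkeeping in Step 2. Each bad vertex in $U$ must be covered by an edge with exactly one vertex in $W$, or else the balance must be corrected by a compensating choice, and the edge $e_0$ must be found among good vertices. I would handle both with a counting argument: the number of $d$-sets touching bad vertices is only $O(\beta n^d)$, while the degree slack is used only qualitatively, since it supplies a single edge. If an edge inside $U$ made of good vertices cannot be found, the fallback is to use an edge with many vertices in $W$, which shifts the balance in the same way.
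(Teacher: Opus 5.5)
The paper gives no proof of this lemma; it is imported from \cite{LYY21}, so I am measuring your proposal against what the argument has to accomplish. Your skeleton is the natural one: discard the $\alpha$-bad vertices, use the sharp degree condition to get an edge inside $U$, and finish with \cref{good}. Your observation that every $D\in\binom Ud$ lies in an edge of $H[U]$ is correct. However, the balancing you call ``bookkeeping'' is where the proof actually lives, and the one concrete mechanism you offer points the wrong way. If $M$ is a perfect matching, then $\sum_{e\in M}(|e\cap W|-1)=|W|-n/k=-1$. Deleting an edge $e$ with $|e\cap W|=j$ turns the deficit $n/k-|W|=1$ into $j$. So only edges inside $U$ reduce the deficit. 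An edge meeting $W$ in at least $k-d+1\ge2$ vertices enlarges it, so your Step~3 fallback is invalid.

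More seriously, Step~2 asserts without proof that each bad vertex can be covered by a fresh edge with exactly one vertex in $W$ and all other vertices good, and this is false in general. Take $k=3$, $d=1$, and fix $u_0,u_1,u_2\in U$ and $v_1,v_2,v_3\in W$. Let $H$ consist of all triples except the sets $\{v_i,x,y\}$ with $x,y\in U\setminus\{u_0\}$ and $\{x,y\}\neq\{u_1,u_2\}$. Then:
\begin{itemize}
\item $\deg_H(v_i)=s_1(3,n)+1$, and every other vertex has degree $\binom{n-1}2-O(n)$;
\item $|E(H_3^2(U,W))\setminus E(H)|=O(n^2)$, and the bad vertices are exactly $v_1,v_2,v_3$;
\item every edge of $H$ that contains $v_i$ and no other vertex of $W$ either contains $u_0$ or equals $\{v_i,u_1,u_2\}$.
\end{itemize}
So at most two of the $v_i$ can be covered the way you prescribe. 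The third must be covered by an edge with at least two vertices in $W$, and that edge must be paid for by an extra edge inside $U$.

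In general, the number of compensating edges inside $U$ can grow with the number of bad vertices, and they must be pairwise disjoint and avoid everything already used. So the degree condition cannot be used ``only qualitatively, to supply a single edge.'' You need its quantitative form: $\deg_{H[U]}(D)\ge \deg_{\mathcal T\setminus H}(D)+1$ for $D\in\binom Ud$, where $\mathcal T=H_k^{k-d}(U,W)$. This is the inequality $c_D\ge b_D+1$ underlying the proof of \cref{thm:extremal}. You then need an argument that the missing template edges which force unbalanced covering edges also force correspondingly many disjoint edges in $H[U]$ on good vertices. That argument is absent, so as written the proposal does not prove the lemma.
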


We now record the stability theorem that will provide the extremal structure
needed later.
Roughly speaking, it asserts that an almost extremal $k$-graph
with bounded matching number must be close to the corresponding space-barrier
construction.
We remark that this result built upon the results on Feige's conjecture \cite{FHWYZZ26, NW26}.

\begin{theorem}[Cao--Liu--Zhang,~\cite{CLZ26}]
\label{CLZ}
Fix $k\ge 2$, $0<a\le 1/(k+1)$, and $\eta>0$. There exist $\xi>0$ and $n_0$ such that the following holds whenever $n\ge n_0$ and $an\le m\le \frac{n}{k+1}$. If $\mathcal {F}\subseteq \binom{[n]}{k}$, $\nu(\mathcal {F})\le m$, and
\[
|\mathcal {F}|\ge {n\choose k}-{n-m\choose k}-\xi n^k,
\]
then $\mathcal{F}$ is $\eta$-contained in $H_1(n,m)$.
\end{theorem}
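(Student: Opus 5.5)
The plan is to run the standard ``maximum matching plus heavy vertices'' argument for the Erd\H{o}s matching problem, and to use the resolved Feige inequality as the optimisation step. Fix a maximum matching $M=\{e_1,\dots,e_{m'}\}$ of $\mathcal F$ with $m'\le m$, and put $V_M=\bigcup e_i$ and $R=[n]\setminus V_M$. Note $|R|\ge n-km\ge n/(k+1)$ is linear. By maximality every edge of $\mathcal F$ meets $V_M$.

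\textbf{Step 1: heavy vertices.} Fix a small $\gamma=\gamma(\eta,k,a)$. Call $v\in V_M$ \emph{heavy} if its link in $\mathcal F$ contains at least $\gamma n^{k-1}$ sets of $\binom{R}{k-1}$, and \emph{light} otherwise.

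\textbf{Step 2: at most one heavy vertex per matching edge.} Suppose some $e_i$ contains two heavy vertices $u,v$. Both links restricted to $R$ are dense, so we can choose disjoint $(k-1)$-sets $S,T\subseteq R$ with $S\cup\{u\},T\cup\{v\}\in\mathcal F$. Replacing $e_i$ by these two edges gives a larger matching, a contradiction.

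This exact augmentation only shows that no $e_i$ has two heavy vertices. It does not yet bound the number of edges meeting several matching edges in light vertices. For that I would use the standard ``$t$ matching edges at a time'' refinement. For a bounded number $t$ of matching edges, a $t$-tuple is \emph{rich} if its vertex set spans many $\mathcal F$-edges that can be completed by many disjoint $R$-tuples. A rich configuration lets us exchange $t$ matching edges for $t+1$ edges. Consequently the edges of $\mathcal F$ are governed by the vertex weights $x_v\in[0,1]$, where $x_v$ is the normalised link density of $v$ into $R$, and, up to $o(n^k)$ error, $|\mathcal F|$ is at most the number of $k$-sets whose total weight is at least $1$. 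The resulting constraint is that each $e_i$ carries total weight at most about $1$.

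\textbf{Step 3: the optimisation (expected main obstacle).} We must show that, under these constraints and with $m\le n/(k+1)$, the count of $k$-sets of total weight at least $1$ is at most $\binom nk-\binom{n-m}k+o(n^k)$. Moreover, equality up to $\xi n^k$ should force the weights to be nearly $\{0,1\}$-valued, with a single weight-$1$ vertex in almost every $e_i$. Sampling a random $k$-set turns its weight into a sum of independent bounded nonnegative variables with mean at most about $1/(k+1)$ per coordinate. Under this scaling, the probability that the sum reaches $1$ is exactly a Feige-type tail $\Pr[X_1+\dots+X_\ell\ge \ell+d]$. The resolved bound $\Theta^d(\ell)=1-(1-\frac1{\ell+d})^\ell$ of \cite{FHWYZZ26,NW26} then gives the extremal count.

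The main difficulty is the \emph{stability} part. We need a quantitative form of the Feige bound in which near-equality forces the distributions to be close to the two-point extremiser (value $\ell+d$ with the appropriate probability, $0$ otherwise). I would try to extract this by revisiting the proof in \cite{FHWYZZ26}. I expect it to follow from compactness plus uniqueness of the extremiser, since $m/n\in[a,1/(k+1)]$ ranges over a compact interval, which is where the uniformity of $\xi$ comes from.

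\textbf{Step 4: conclusion.} With near-extremality established, let $W$ be the set of heavy vertices. Then $|W|\approx m$, and all but $\eta n^k$ of the $k$-sets meeting $W$ lie in $\mathcal F$. Adjusting $W$ to have size exactly $m$ changes only $O(\gamma n^k)$ sets, so $\mathcal F$ is $\eta$-contained in $H_1(n,m)$.
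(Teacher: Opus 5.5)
The paper gives no proof of this statement. It quotes it from Cao--Liu--Zhang and uses it as a black box: a stability form of the fractional bound built on the resolved Feige-type inequality. Judged on its own, your Step 1 and the two-heavy-vertex exchange in Step 2 are correct. The passage from Step 2 to Step 3 has a genuine gap: the claim that, with $x_v$ the normalised link density of $v$ into $R$, all but $o(n^k)$ edges $e\in\mathcal F$ satisfy $\sum_{v\in e}x_v\ge 1$.

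This claim is false. Take $\mathcal F=\binom{[km+k-1]}{k}$, so $\nu(\mathcal F)=m$, and let $M$ lie inside $[km+k-1]$. Each $v\in V_M$ has exactly one $(k-1)$-subset of $R$ in its link, namely the $k-1$ leftover clique vertices. So every $x_v=O(n^{1-k})$, yet $|\mathcal F|=\Theta(n^k)$ because $m\ge an$. The families $\{F:|F\cap[t(m+1)-1]|\ge t\}$ with $2\le t\le k$ behave the same way. There all single-vertex link densities into $R$ are $O(n^{1-t})$, while the edges are covered by weight $1/t$ on $[t(m+1)-1]$.

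The reason is structural. For $an\le m\le n/(k+1)$ the set $V_M$ can have about $kn/(k+1)$ vertices, so most $k$-sets meet $R$ in few or no vertices, and single-vertex links into $R$ say nothing about them. Your exchange only uses heavy vertices, so it constrains nothing about edges meeting matching edges without a heavy vertex; in the clique example no matching edge has one. In the $t\ge2$ families the relevant density sits in joint links of $t$-sets of $V_M$ into $R$, which single-vertex weights cannot see. The ``rich $t$-tuple'' refinement is not specified, and nothing in it yields the covering inequality you need, not even for near-extremal $\mathcal F$. Single-vertex arguments of this kind work when $n\gg k^2m$, where edges inside $V_M$ are negligible, but not in this range.

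What is missing is a robust passage from $\nu(\mathcal F)\le m$ to a genuine fractional cover. Note that $\nu\le m$ does not give $\nu^*\le m$. However, via weak hypergraph regularity, a fractional matching of the cluster $k$-graph lifts to an integer matching of $\mathcal F$ of almost the same relative size. Hence after deleting $o(n^k)$ edges there is $w:[n]\to[0,1]$ with $\sum_v w(v)\le m+o(n)$ and $\sum_{v\in e}w(v)\ge1$ for every remaining edge. With this LP-dual $w$ in place of your link densities, your Step 3 computation goes through. Only the i.i.d.\ case is needed, with values in $[0,1]$ and mean at most $m/n\le 1/(k+1)$. Step 4 is then routine.

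The second gap is the stability of the tail bound itself. The cited results give the value of the supremum, not uniqueness of the extremiser or any quantitative near-equality statement. So ``compactness plus uniqueness'' postpones the key step rather than supplying it. That stability statement is exactly what the paper takes from Cao--Liu--Zhang.
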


We next need the auxiliary tools for the passage from an almost perfect matching to a perfect matching. 
The first is an absorbing lemma of H\`an, Person and Schacht~\cite{HPS09}.

\begin{lemma}[Absorbing lemma, H\`an--Person--Schacht~\cite{HPS09}]\label{lem:absorber}
Given integers $d\in[k-1]$ and $\gamma\in (0,1)$, there exists $n_0$ such that the following holds for every $k$-graph $H$ on $n\ge n_0$ vertices.  
If $\delta_d(H)\ge\left(\frac12+2\gamma\right)\binom{n-d}{k-d}$,
then there exists a set $X\subseteq V(H)$ such that
$ |X|\le \gamma^kn$
and, for every $W\subseteq V(H)\setminus X$ satisfying $
 |W|\le \gamma^{2k}n$
and  $|W|\in k\mathbb N$,
there is a matching in $H$ covering exactly the vertices of $X\cup W$.
\end{lemma}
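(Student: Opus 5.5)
The plan is the standard absorbing method. Call a set $A\subseteq V(H)$ with $|A|=k^2$ an \emph{absorber} for a $k$-set $S$ if $A\cap S=\varnothing$ and both $H[A]$ and $H[A\cup S]$ have perfect matchings. First I will show that every $k$-set has $\Omega(n^{k^2})$ absorbers. Then I will pick a random family of pairwise disjoint absorbers and take $X$ to be their union. Finally I will absorb $W$ greedily, one $k$-chunk at a time.

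\emph{Step 1 (common neighbourhoods).} Fix two distinct vertices $u,v$. The claim is that there are at least $c_1\gamma n^{k-1}$ sets $T\in\binom{V\setminus\{u,v\}}{k-1}$ with $T\cup\{u\}\in E(H)$ and $T\cup\{v\}\in E(H)$. To prove it, take any $(d-1)$-set $D$ avoiding $u$ and $v$. The links of the $d$-sets $D\cup\{u\}$ and $D\cup\{v\}$ are families of $(k-d)$-sets in a ground set of size about $n$. Each has size at least $(\frac12+2\gamma)\binom{n-d}{k-d}$, so by inclusion--exclusion they share at least $4\gamma\binom{n-d}{k-d}-O(n^{k-d-1})$ members. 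Each common member $R$ gives $T=D\cup R$. Summing over all $D$, each $T$ is counted at most $\binom{k-1}{d-1}$ times, which gives the claim. (For $d=1$ we have $D=\varnothing$ and the argument is immediate.)

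\emph{Step 2 (counting absorbers).} Let $S=\{v_1,\dots,v_k\}$. Choose an edge $e=\{u_1,\dots,u_k\}$ disjoint from $S$; there are $\Omega(n^k)$ such edges, since the minimum degree condition gives $|E(H)|\ge \frac12\binom nk$. Then, for $i=1,\dots,k$ in turn, choose a $(k-1)$-set $T_i$ such that $T_i\cup\{u_i\}$ and $T_i\cup\{v_i\}$ are both edges and $T_i$ avoids everything chosen so far. By Step 1 there are at least $c_1\gamma n^{k-1}-O(n^{k-2})$ choices at each stage. Put $A=e\cup T_1\cup\dots\cup T_k$. Then $\{T_i\cup\{u_i\}\}_{i\le k}$ is a perfect matching of $A$, and $\{e\}\cup\{T_i\cup\{v_i\}\}_{i\le k}$ is a perfect matching of $A\cup S$. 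Hence every $S$ has at least $c_2\gamma^{k}n^{k^2}$ absorbers, where $c_2=c_2(k,d)>0$.

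\emph{Step 3 (random selection).} Include each $k^2$-set independently with probability $p=\beta n^{1-k^2}$, where $\beta=c\gamma^{k}$ and $c=c(k)$ is small enough to meet the size bound below. Call the resulting family $\mathcal F$. By Chernoff and Markov bounds together with a union bound over the $\binom nk$ sets $S$, with positive probability the following three properties hold simultaneously.
\begin{itemize}
\item $|\mathcal F|\le 2\beta n$.
\item Every $S$ has at least $\frac12 c_2\gamma^k\beta n$ absorbers in $\mathcal F$.
\item The number of intersecting pairs in $\mathcal F$ is $O(\beta^2 n)$.
\end{itemize}
Delete one member of each intersecting pair, and delete every member that is not an absorber for any $S$. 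Let $\mathcal F'$ be what remains. Then $\mathcal F'$ consists of pairwise disjoint absorbers, and each $S$ still has at least $\frac14 c_2\gamma^k\beta n\ge \gamma^{2k}n/k$ absorbers in $\mathcal F'$. This holds once $c$ is chosen suitably and $\gamma$ is small; for larger $\gamma$ one simply runs the argument with a smaller $\gamma$. Set $X=\bigcup\mathcal F'$, so that $|X|\le 2k^2\beta n\le\gamma^k n$. Because each member of $\mathcal F'$ has a perfect matching, $H[X]$ has one.

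\emph{Step 4 (absorbing $W$).} Given $W$ with $|W|\le\gamma^{2k}n$ and $k\mid |W|$, split $W$ into at most $\gamma^{2k}n/k$ disjoint $k$-sets. Assign to each $k$-set its own unused absorber in $\mathcal F'$; this is possible because each $k$-set has at least $\gamma^{2k}n/k$ absorbers available. Use the matching of $A\cup S$ for each used pair and the matching of $A$ for each unused absorber.

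The main obstacle is balancing the constants in Step 3. The absorber density $\gamma^k$ obtained in Step 2 must dominate the deletions, so that the $\gamma^{2k}n$ bound on $|W|$ is met while keeping $|X|\le\gamma^kn$. I would handle this by taking $\beta$ a small constant multiple of $\gamma^k$ and checking that the loss of order $\beta^2 n$ is negligible compared with $\gamma^k\beta n$.
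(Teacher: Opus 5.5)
\textbf{There is a gap in Step~3: the constants do not balance as claimed.} The paper does not prove this lemma; it quotes it from H\`an--Person--Schacht. Your Steps~1, 2 and~4 reproduce their argument correctly: the $k^2$-vertex absorbers $e\cup T_1\cup\dots\cup T_k$, the common-link count, and the greedy absorption of $k$-chunks.

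The statement has no spare constant. The bound $|X|\le\gamma^k n$ and the capacity $\gamma^{2k}n$ are tied together, so the absorber density must beat the $k^2$ loss (from $|X|=k^2|\mathcal F'|$) and the $1/k$ loss by an explicit margin. An unspecified $c_2=c_2(k,d)$ cannot guarantee this.

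With your bookkeeping, the two requirements $2k^2\beta\le\gamma^k$ and $\tfrac14 c_2\gamma^k\beta\ge\gamma^{2k}/k$ (with $\beta=c\gamma^k$) are compatible only if $c_2\ge 8k$. But necessarily $c_2\le 4^k/(k^2)!$, because there are fewer than $n^{k^2}/(k^2)!$ sets of size $k^2$ and the hypothesis is satisfiable for $\gamma$ up to $1/4$. So the inequality $\tfrac14 c_2\gamma^k\beta n\ge\gamma^{2k}n/k$ fails.

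Neither proposed remedy helps:
\begin{itemize}
\item Taking $\gamma$ small changes nothing, since both sides scale like $\gamma^{2k}$.
\item Running the argument with some $\gamma'<\gamma$ gives capacity only $\gamma'^{2k}n<\gamma^{2k}n$, so that reduction goes the wrong way.
\item The loss from intersecting pairs is not negligible either. At the largest $\beta$ compatible with $|X|\le\gamma^k n$, it is itself of order $\gamma^{2k}n$.
\end{itemize}

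\textbf{The missing piece is an explicit count, normalised by $\binom{n}{k^2}$ rather than $n^{k^2}$.}
\begin{itemize}
\item Step~1 actually shows that at least a $(4\gamma-o(1))$-fraction of all $(k-1)$-sets lie in the common link of $u_i$ and $v_i$.
\item At least half of all $k$-sets are edges.
\item Counting ordered tuples $(u_1,\dots,u_k,T_1,\dots,T_k)$ and dividing by the multiplicity $(k^2)!/((k-1)!)^k$ shows that every $S$ has at least $(1-o(1))\tfrac12(4\gamma)^k\binom{n}{k^2}$ absorbers.
\end{itemize}

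Now choose $p$ with $\mu:=p\binom{n}{k^2}=\gamma^k n/(2k^2)$. With positive probability all of the following hold:
\begin{itemize}
\item $|X|\le k^2(1+o(1))\mu\le\gamma^k n$.
\item Every $S$ has at least $(1-o(1))\,4^k\gamma^{2k}n/(4k^2)$ absorbers in $\mathcal F$.
\item The expected number of intersecting pairs is at most $k^4\mu^2/(2n)=\gamma^{2k}n/8$, so by Markov at most $\gamma^{2k}n/4$ members are deleted.
\end{itemize}
Since $4^k/(4k^2)-1/4>1/k$ for all $k\ge 3$, at least $\gamma^{2k}n/k$ absorbers survive for every $S$, and Step~4 then goes through. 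The factor $4^k$, which comes from the $2\gamma$ surplus in the degree condition, is what makes the stated exponents work.

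For the paper's application, a weaker version with capacity $c(k)\gamma^{2k}n$ would also suffice, since Lemma~\ref{lem:almost-perfect} can be applied with a smaller $\rho_0$. Your sketch does establish that weaker version, but not the lemma as stated.
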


After reserving such an absorbing set, it remains to find a matching covering almost all vertices of the remaining hypergraph. 
For this purpose, we use the following standard result of Frankl and R\"odl~\cite{FR85}, which guarantees an almost perfect matching in an almost regular hypergraph with sufficiently small pair-degrees.

\begin{lemma}[Frankl--R\"odl,~\cite{FR85}]\label{lem:FR}
For every integer $k\ge2$ and every $\sigma>0$, there exist $\tau>0$ and $D_0$ such that the following holds.  Let $J$ be an $n$-vertex $k$-graph and let $D$ satisfy $n\ge D\ge D_0$.  
If for each vertex $v\in V(J)$, we have $d(v)=(1\pm\tau)D$
and for every pair $\{u,v\}$ of vertices of $J$, we have
$d(u,v)<\tau D$,
then $J$ contains a matching covering all but at most $\sigma n$ vertices.
\end{lemma}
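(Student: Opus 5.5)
The natural route to \cref{lem:FR} is the R\"odl nibble, i.e.\ the semi-random method of Frankl and R\"odl. The matching is built in a bounded number of rounds. Each round takes a small random bite of edges, keeps the disjoint ones, deletes the covered vertices, and checks that the remaining hypergraph is again almost regular with small codegrees, so that the argument can be iterated.

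Fix a small $\theta>0$ (depending on $k,\sigma$) and write $J_0:=J$, $D_0:=D$. In round $i$, each edge of the current hypergraph $J_i$ is selected independently with probability $\theta/D_i$. Let $M_i$ be the set of selected edges that intersect no other selected edge, add $M_i$ to the matching, and let $J_{i+1}$ be the subhypergraph induced on the vertices not covered by any selected edge. Since degrees are $(1\pm\tau_i)D_i$ and codegrees are $o(D_i)$, a vertex $v$ is covered by a selected edge with probability $\approx 1-e^{-\theta}$. Given an edge $e\ni v$, the events that the other vertices of $e$ survive are nearly independent, because codegrees are small, so each survives with probability $\approx e^{-\theta}$. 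Hence the expected degree of a surviving vertex in $J_{i+1}$ is $(1\pm\tau_i')D_{i+1}$ with $D_{i+1}=e^{-(k-1)\theta}D_i$. Codegrees can only shrink, and they remain at most $\tau_i D_i \ll D_{i+1}$ provided $\theta$ is fixed and $\tau$ is chosen small enough relative to the number of rounds.

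The key probabilistic step, and the main obstacle, is proving that these expectations are attained simultaneously for all vertices. The approach is to use a bounded-differences (Azuma/Talagrand-type) concentration inequality for the degree of $v$ in $J_{i+1}$, or, alternatively, the second-moment method combined with the local lemma. The codegree bound $d(u,v)<\tau D$ is exactly what controls the Lipschitz constants and the dependencies: changing the choice of one edge affects the degree of $v$ by at most $k\cdot\max d(u,v)$. Because $D\ge D_0$ is large, the failure probabilities are $\exp(-D^{\Omega(1)})$ per vertex. If $n$ is so large that a union bound fails, one can use the local lemma, since each vertex-event depends on only $\mathrm{poly}(D)$ others. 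After each round the regularity error grows from $\tau_i$ to $\tau_{i+1}=O(\tau_i+\theta^2)+D_i^{-c}$. To keep this under control, choose $\theta$ small and then $\tau$ tiny compared with the number $t$ of rounds.

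Finally, one counts vertices. Within a round, the number of vertices covered by selected edges that are not disjoint from all others is $O(\theta^2)$ times the number of vertices covered, so the matching $M_i$ covers a $(1-e^{-\theta})(1-O(\theta))$ fraction of $V(J_i)$. After $t$ rounds the uncovered set has size roughly $e^{-\theta t}n+O(\theta)n$. Choosing $\theta$ with $O(\theta)<\sigma/2$ and $t$ with $e^{-\theta t}<\sigma/2$, and then $\tau$ and $D_0$ so that the error terms stay small through all $t$ rounds (note that $D_t\ge e^{-(k-1)\theta t}D_0$ is still large), the union $\bigcup_i M_i$ is a matching of $J$ missing at most $\sigma n$ vertices.
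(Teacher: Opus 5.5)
The paper gives no proof of this lemma; it is quoted from~\cite{FR85}. A R\"odl nibble is the right strategy, but your concentration step is false under the stated hypothesis. With it fails the invariant you iterate, namely that \emph{every} vertex of $J_i$ has degree $(1\pm\tau_i)D_i$.

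The codegree bound is only $d(u,v)<\tau D$ with $\tau$ fixed. So your Lipschitz constant $k\max_u d(u,v)$ can be of order $\tau D$, spread over $\Theta(D^2)$ relevant edges, and no bounded-differences inequality then gives $\exp(-D^{\Omega(1)})$ at deviation scale $\tau_1 D$. This is not a weakness of the inequality, as the following $3$-graph shows. Take $V=P\cup Q\cup R$ with $|P|=|Q|=|R|=n/3$, and a $(2/\tau)$-regular bipartite graph $B$ between $P$ and $Q$. For each $xy\in B$ choose a set $S_{xy}\subseteq R$ of size $\tau D/2$ so that every $z\in R$ lies in exactly $D$ of these sets. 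The edges are the triples $\{x,y,z\}$ with $xy\in B$ and $z\in S_{xy}$. This $3$-graph is $D$-regular, and all its codegrees are at most $\max\{\tau D/2,\,2/\tau\}<\tau D$.

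Fix $x\in P$. With probability about $e^{-\theta}(1-e^{-\theta})^{2/\tau}$, the vertex $x$ is uncovered while all $2/\tau$ of its $B$-neighbours are covered. Given that $x$ is uncovered, these events depend on pairwise disjoint edge sets, since no edge contains two vertices of $Q$, so they are independent. In that case $x$ is isolated in $J_1$. This probability depends only on $\theta$ and $\tau$, not on $D$ or $n$. So neither a union bound nor the local lemma (which needs failure probabilities $\ll 1/\mathrm{poly}(D)$) can produce an outcome in which every vertex of $J_1$ is regular. Indeed, the expected number of isolated vertices of $J_1$ is $\Omega(n)$.

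Your scheme does work, via a union bound, when codegrees are at most $D^{1-c}$ and $\log n\le D^{c'}$. That covers the paper's application, where codegrees are at most $\ln n$ and $D\approx n/\ln n$, but not the lemma as stated.

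The missing idea is Pippenger's weakening of the invariant. Track only two things: all but $\delta_i|V(J_i)|$ vertices of $J_i$ have degree $(1\pm\delta_i)D_i$, and every vertex has degree $d(x)\le KD_i$. The second bound is inherited automatically, because degrees only decrease and $D_{i+1}/D_i$ is a fixed constant. The round-by-round analysis then runs as follows.
\begin{itemize}
\item For a regular vertex $x$, the codegree bound gives $\mathrm{Var}(d_{J_{i+1}}(x))=O(\delta_i D_i^2)$. By Chebyshev, $x$ becomes irregular with probability $O(\delta_i/\delta_{i+1}^2)$, and Markov's inequality bounds the number of newly irregular vertices.
\item The numbers of surviving and wasted vertices are controlled by a second-moment computation. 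Covariances between vertices are again governed by codegrees, and the max-degree bound gives variance $O(n)$. So no union bound or local lemma over vertices is needed.
\item The exceptional vertices cost only $O(\delta_i)n$ per round.
\end{itemize}
Choosing $\tau=\delta_0\ll\delta_1\ll\cdots\ll\delta_t\ll\sigma$ closes the argument; this is possible because $t$ depends only on $\theta$ and $\sigma$.
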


We shall use the following rainbow fractional matching theorem of
Aharoni, Holzman, and Jiang~\cite{AHJ}.
Roughly speaking, it allows us to select one edge from each member of a family of hypergraphs while preserving a prescribed fractional matching size.

\begin{theorem}[Aharoni--Holzman--Jiang,~\cite{AHJ}]\label{rainfracmat}
    Let $r\ge 2$ be an integer, and let $n$ be a positive rational number. Let
    $H_1,\dots,H_{\lceil rn\rceil}$ be $r$-graphs such that $\nu^*(H_i)\ge n$ for
    $i\in[\lceil rn\rceil]$. Then there exist $e_1\in H_1,\dots,e_{\lceil rn\rceil}\in
    H_{\lceil rn\rceil}$ such that $\left\{e_1,\dots,e_{\lceil rn\rceil}\right\}$ has a fractional
    matching of size $n$.
\end{theorem}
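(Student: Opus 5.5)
The plan is to argue by contradiction through LP duality and then use a topological covering lemma of KKM type, which is the standard tool for rainbow fractional statements. Write $N=\lceil rn\rceil$ and $V=\bigcup_i V(H_i)$. For a set $F=\{e_1,\dots,e_N\}$ we have $\nu^*(F)=\tau^*(F)$. So it suffices to exclude the following situation: for \emph{every} rainbow choice $(e_1,\dots,e_N)\in H_1\times\cdots\times H_N$ there is a fractional vertex cover of that choice with weight $<n$.

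The key reformulation is geometric. For each $i$, fix a fractional matching $f_i$ of $H_i$ with weight exactly $n$; scaling down if necessary, we may assume this. Then the point $p_i=\frac1n\sum_e f_i(e)\chi_e$ lies in $\mathrm{conv}\{\chi_e:e\in H_i\}$. Every coordinate of $p_i$ is at most $1/n$, and its coordinate sum is $r$. A rainbow set $F$ has $\nu^*(F)\ge n$ if and only if some convex combination of $\{\chi_{e_i}\}$ lies in the box $Q=\{x\in\mathbb R^V_{\ge0}:x\le \frac1n\mathbf 1\}$. This is a \emph{colorful Carath\'eodory} type problem. Each colour class has a convex hull meeting $Q$, and we want a rainbow transversal whose convex hull meets $Q$. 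The naive colorful Carath\'eodory theorem would need $|V|+1$ colours, so I would exploit the special structure instead. All the points $\chi_e$ lie on the hyperplane $\sum x_v=r$, and $Q$ is a box. This should let the number of colours needed be controlled by $r\cdot n$ rather than by $|V|$: in $Q$ at most about $rn$ coordinates can be saturated at $1/n$.

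Concretely, I would parametrize by the simplex $\Delta$ of probability vectors $\lambda\in\mathbb R^N$ (weights on colours). For $\lambda\in\Delta$, consider the weighted LP: maximize the total weight $\sum_i \lambda_i t_i$ over rainbow choices subject to vertex loads $\le 1$. For each colour $i$, define a closed set $C_i\subseteq\Delta$ consisting of those $\lambda$ at which colour $i$ can be ``served'' by an edge of $H_i$ that is not blocked by the optimal dual cover $w_\lambda$, meaning $w_\lambda(e)<1$ for some $e\in H_i$ in an appropriate sense. The hypothesis $\nu^*(H_i)\ge n$ gives the boundary condition needed for the KKM/KKMS theorem (Shapley's balanced version): on the face $\{\lambda_i=0\}$, the colours of the support still cover. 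The theorem then yields a point $\lambda^*$ and a balanced family of colours whose sets all contain $\lambda^*$. Picking for each colour the corresponding edge and using the balancing weights as the fractional matching gives, by complementary slackness, a rainbow set with fractional matching number $\ge n$. To reach total weight $n$, the count $N\ge rn$ enters here. A dual cover of weight $<n$ has coordinate sum $<n$, so each of its $r$-sets covered to level $1$ uses mass $\ge 1$. Averaging over the $N$ colours then shows that such a cover cannot block all colours simultaneously unless $N<rn$.

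I expect the main obstacle to be the definition of the sets $C_i$ so that they are simultaneously closed, satisfy the KKM boundary covering condition, and yield a genuine fractional matching (not merely a fractional ``near''-matching) at the balanced point. This is the point where the rounding $\lceil rn\rceil$ and the rationality of $n$ must be used exactly. If the topological route becomes unwieldy, my fallback is a minimal counterexample argument. I would take an extreme point of the LP for a best rainbow choice, note that its support has at most as many edges as tight vertex constraints, and perform exchange steps (swap $e_i$ for another edge of $H_i$ guided by $f_i$) to increase $\nu^*$, showing that the process cannot stall while fewer than $rn$ units of vertex capacity are used.
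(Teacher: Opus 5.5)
The paper gives no proof of this statement. It quotes it as a black box from Aharoni--Holzman--Jiang and uses it only to bound the support of $f_{s+1}$. So your proposal has to stand on its own, and as written it has genuine gaps.

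\textbf{The main topological route is never set up.} Your reformulation is correct: a rainbow $F$ works iff some convex combination of the $\chi_{e_i}$ lies in $[0,1/n]^V$. The problems come after that.
\begin{enumerate}
\item The sets $C_i$ are defined by the strict inequality $w_\lambda(e)<1$, so they are not closed.
\item The optimal dual $w_\lambda$ is neither unique nor continuous in $\lambda$. Indeed the ``weighted LP over rainbow choices'' is not a linear program, so it has no well-defined dual.
\item Your sets are indexed by single colours, so the only balanced family is all of $[N]$. The KKMS conclusion therefore collapses to $\bigcap_i C_i\ne\varnothing$, and its balancing weights (all equal to $1$) encode no fractional matching.
\end{enumerate}

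\textbf{Your account of where $N\ge rn$ enters is wrong.} When $n>1$, the cover $w=\mathbf 1_v$ has weight $1<n$ and covers every rainbow choice whose edges all pass through $v$, however large $N$ is. Conversely, no cover of weight $<n$ can cover a whole class $H_i$, simply because $\tau^*(H_i)=\nu^*(H_i)\ge n$; averaging over colours plays no role. The correct count is the basic-solution bound. A basic optimal fractional matching of a rainbow $F$ has support at most the number of saturated vertices, and that number is at most $r\nu^*(F)$. So if $\nu^*(F)<n$, fewer than $rn\le N$ colours carry positive weight, and some colour is idle.

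\textbf{The fallback fails as stated.} It tries to exploit that idle colour, but a single swap need not increase $\nu^*$ even when $\nu^*(F)<n$. Take $r=2$, $n=2$, $N=4$ and $H_1=\dots=H_4=\{ab,ac,bd\}$. Then $\nu^*(H_i)=2$, with unique optimum $f_i=\mathbf 1_{ac}+\mathbf 1_{bd}$.
\begin{itemize}
\item For $F=(ab,ab,ab,ab)$ we have $\nu^*(F)=1$ and three idle colours.
\item Replacing any one edge by $ac$ or by $bd$ leaves $\nu^*=1$, since then all edges contain $a$, or all contain $b$.
\item These are exactly the swaps ``guided by $f_i$''. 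They are also exactly the edges with $w(e)<1$ for the optimal dual $w=\tfrac12(\mathbf 1_a+\mathbf 1_b)$.
\item Yet $(ac,bd,ab,ab)$ has $\nu^*=2$.
\end{itemize}
So the process does stall while only $2<rn$ units of capacity are used. The non-uniqueness of optimal duals, which you flagged as the main obstacle, is exactly what breaks it.

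What is missing is a genuinely colourful mechanism that turns an idle colour into a strict improvement. That means either a correctly posed KKM/KKMS-type lemma with verified closedness and boundary conditions, or an exchange argument driven by a potential other than $\nu^*$ together with a matching dimension count. The proposal supplies neither.
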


We also need the following standard concentration estimates.

\begin{lemma}[Chernoff]\label{chernoff}
Let $X$ be a sum of independent Bernoulli random variables with
$\mu=\mathbb E X$. Then, for every $0<\delta\le1$,
\[
\Pr(|X-\mu|\ge \delta\mu)
\le 2\exp\left(-\frac{\delta^2\mu}{3}\right).
\]
Moreover, for every $t>\mu$,
\[
\Pr(X\ge t)
\le
\left(\frac{e\mu}{t}\right)^t.
\]
\end{lemma}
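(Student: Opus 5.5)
The plan is the standard exponential-moment (Bernstein--Chernoff) method. Write $X=\sum_i X_i$ with $X_i\sim\mathrm{Bernoulli}(p_i)$ independent, so that $\mu=\sum_i p_i$. For any real $\lambda$, independence together with $1+x\le e^x$ gives
\[
\mathbb E e^{\lambda X}=\prod_i\bigl(1+p_i(e^\lambda-1)\bigr)\le \exp\bigl(\mu(e^\lambda-1)\bigr).
\]
For $\lambda>0$, Markov's inequality applied to $e^{\lambda X}$ yields $\Pr(X\ge t)\le \exp\bigl(\mu(e^\lambda-1)-\lambda t\bigr)$. Taking $\lambda=\ln(t/\mu)>0$, which is valid because $t>\mu$, gives
\[
\Pr(X\ge t)\le e^{t-\mu}(\mu/t)^t\le (e\mu/t)^t .
\]
This is already the second statement of \cref{chernoff}.

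For the first statement, split into the upper and lower tails. For the upper tail, take $t=(1+\delta)\mu$ in the bound just obtained. This gives $\Pr(X\ge(1+\delta)\mu)\le \bigl(e^{\delta}(1+\delta)^{-(1+\delta)}\bigr)^{\mu}$, so it suffices to show that
\[
f(\delta):=(1+\delta)\ln(1+\delta)-\delta-\delta^2/3\ge 0 \quad\text{for } \delta\in(0,1].
\]
For the lower tail, apply Markov's inequality to $e^{\lambda X}$ with $\lambda=\ln(1-\delta)<0$ when $\delta<1$. This gives $\Pr(X\le(1-\delta)\mu)\le \bigl(e^{-\delta}(1-\delta)^{-(1-\delta)}\bigr)^\mu$. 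This is at most $e^{-\delta^2\mu/2}\le e^{-\delta^2\mu/3}$ by the elementary inequality $(1-\delta)\ln(1-\delta)\ge -\delta+\delta^2/2$, which follows by expanding the Taylor series of $\ln(1-\delta)$: all the higher-order terms are nonnegative. The endpoint $\delta=1$ is handled directly, since $\Pr(X\le 0)=\prod_i(1-p_i)\le e^{-\mu}$. A union bound over the two tails then produces the factor $2$.

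The only step needing real care is the calculus inequality $f\ge0$ on $(0,1]$. I would verify it by differentiating. First, $f(0)=0$ and $f'(\delta)=\ln(1+\delta)-2\delta/3$ with $f'(0)=0$. Next, $f''(\delta)=\frac1{1+\delta}-\frac23$, which is nonnegative on $[0,1/2]$ and negative afterwards. So $f'$ increases on $[0,1/2]$ and is concave on $[1/2,1]$. Since $f'(1)=\ln 2-2/3>0$, it follows that $f'\ge0$ throughout $[0,1]$, and hence $f\ge 0$ there. Everything else is routine bookkeeping.
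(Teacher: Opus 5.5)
Your proof is correct and complete. It is the standard exponential-moment argument, including the calculus check that $f\ge 0$ on $(0,1]$ and the separate treatment of $\delta=1$; the paper gives no proof of its own and simply quotes the lemma as a standard estimate. The only degenerate case is $\mu=0$, where $\lambda=\ln(t/\mu)$ is undefined, but both bounds then hold trivially because $X=0$ almost surely.
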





\section{The extremal case}\label{sec:extremal}

Now we are ready to state and prove our extremal case theorem.

\begin{theorem}[Extremal case]\label{thm:extremal}
Fix $k\ge3$ and $1\le d\le k-2$.  There exist $\eta>0$ and $n_0$ such that the following holds whenever $n\ge n_0$ and $k\mid n$.  If an $n$-vertex $k$-graph $H$ satisfies
\[
 \delta_d(H)\ge s_d(k,n)+1
\]
and is $\eta$-space-extremal, then $H$ has a perfect matching.
\end{theorem}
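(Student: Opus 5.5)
The plan is to reduce to Lemma~\ref{lem:LYY-close}. Since $H$ is $\eta$-space-extremal, there is a set $B$ with $|B|=(1-\frac1k)n$ and $e_H(B)\le\eta\binom nk$. Put $A:=V\setminus B$, so $|A|=n/k$. The target template is $H_k^{k-d}(U,W)$ with $|W|=m=n/k-1$. Our $A$ has one vertex too many, so we first choose any vertex $a_0\in A$. We then set $W:=A\setminus\{a_0\}$ and $U:=B\cup\{a_0\}$. Deleting one vertex from the "small" side changes edge counts by only $O(n^{k-1})$, which is negligible.

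The main step is to show that $H$ is $\eps_0$-contained in $H_k^{k-d}(U,W)$, for $\eps_0$ as given by Lemma~\ref{lem:LYY-close}. The edges of this template are the $k$-sets meeting $W$ in between $1$ and $k-d$ vertices. Since $H$ has almost no edges inside $B$, we want to show that $H$ contains almost all $k$-sets meeting $W$.

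We do this by double counting with the degree condition. Sum $\deg_H(D)$ over all $d$-sets $D\subseteq B$. Each edge $e$ is counted $\binom{|e\cap B|}{d}$ times, so
\[
\sum_{D\in\binom Bd}\deg_H(D)=\sum_{e\in H}\binom{|e\cap B|}{d}\ge\binom{|B|}{d}\bigl(s_d(k,n)+1\bigr).
\]
In the complete space barrier $\mathcal S(A)$, every $d$-set in $B$ has degree exactly $\binom{n-d}{k-d}-\binom{n-d-n/k}{k-d}\approx s_d(k,n)$. Hence the left-hand side is at most the corresponding sum for $\mathcal S(A)$, plus the contribution of edges inside $B$, which is at most $\binom kd\eta\binom nk$. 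It follows that the weighted count $\sum_e\binom{|e\cap B|}{d}$ over the $k$-sets meeting $A$ that are \emph{missing} from $H$ is $O(\eta n^k)$, up to lower-order errors.

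Every $k$-set with $1\le|e\cap A|\le k-d$ has $|e\cap B|\ge d$, so its weight is at least $1$. Therefore the number of template edges missing from $H$ is $O(\eta n^k)$. Choosing $\eta\ll\eps_0$, and absorbing the $O(n^{k-1})$ error from moving $a_0$, we conclude that $H$ is $\eps_0$-contained in $H_k^{k-d}(U,W)$.

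Now $\delta_d(H)>s_d(k,n)=\binom{n-d}{k-d}-\binom{n-d-m}{k-d}$ with $m=n/k-1$, and $d\le k-2$. So all hypotheses of Lemma~\ref{lem:LYY-close} are satisfied, and it yields a perfect matching.

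The main obstacle is the bookkeeping in the double count. One must check that the degree excess from edges inside $B$ (at most $\eta n^k$ in total) cannot compensate for many missing template edges. This holds because those missing edges carry weight at least $1$ each, while edges inside $B$ carry weight at most $\binom kd$. One must also handle the off-by-one between $|A|=n/k$ and $m=n/k-1$ carefully. If some subtlety arises there, the fallback is to pick $a_0$ of minimum degree into the structure, or to apply Lemma~\ref{good} directly after moving a few bad vertices. In the intended route, however, the containment statement suffices, since Lemma~\ref{lem:LYY-close} requires only global closeness.
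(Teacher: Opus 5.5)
Your proposal is correct and is essentially the paper's argument. You bound the number of missing template edges by double counting $d$-degrees against the extremal structure, so that any degree surplus must come from the few edges inside the sparse side, and then invoke Lemma~\ref{lem:LYY-close}. The only difference is bookkeeping: the paper sums over $d$-sets $D\subseteq U=B_0\cup\{x\}$, where $\deg_{\mathcal T}(D)=s_d(k,n)$ holds exactly, which gives $b_D\le c_D$ directly with no off-by-one error term. You instead sum over $D\subseteq B$ against $\mathcal S(A)$ with $|A|=n/k$, and then absorb the resulting $O(n^{k-1})$ discrepancies.
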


\begin{proof}
Put $r=k-d$ and $q=n/k$.  Let $\eps_0$ and $N_0$ be supplied by Lemma \ref{lem:LYY-close}.  Choose $\eta \ll \eps_0$.

Since $H$ is $\eta$-space-extremal, there exists $B_0\subseteq V(H)$ such that
$|B_0|=n-q$ and $e_H(B_0)\le\eta\binom nk$.
Let $A_0:=V(H)\setminus B_0$, choose any $x\in A_0$, and define
$W:=A_0\setminus\{x\}$ and $U:=B_0\cup\{x\}$.
Then $|W|=q-1$ and $|U|=n-q+1$.
By Lemma~\ref{lem:LYY-close}, it suffices to show that $H$ is $\eps_0$-contained in $\mathcal T:= H_k^r(U,W)$, that is, $|\mathcal T\setminus H|\le\eps_0 n^k$.



Fix $D\in\binom Ud$.
We shall consider $b_D:=\deg_{\mathcal T\setminus H}(D)$,
$c_D:=\deg_{H[U]}(D)$, and $\deg_{\mathcal T}(D)$.
The number of $\mathcal T$-edges containing $D$ is
\[
 \deg_{\mathcal T}(D)
 =\binom{n-d}{r}-\binom{|U|-d}{r}
 =\binom{n-d}{r}-\binom{n-d-q+1}{r}
 =s_d(k,n).
\]
Every $H$-edge containing $D$ is either contained in $U$, or meets $W$ in between one and $r$ vertices and hence belongs to $\mathcal T$. So we have the identity
 $\deg_H(D)=s_d(k,n)-b_D+c_D$.
The minimum-degree assumption gives
$ c_D = \deg_H(D)-s_d(k,n)+b_D \ge b_D$.

As $U=B_0\cup\{x\}$ and $n$ is large enough, we have
\[
 e_H(U)
 \le e_H(B_0)+\binom{|B_0|}{k-1}
 \le\eta\binom nk+\binom n{k-1} \le 2\eta\binom nk.
\]
Now sum over all $D\in\binom Ud$.  
Since $|e\cap W|\le r=k-d$, we have $|e\cap U|\ge d$, so every missing $\mathcal T$-edge is counted at least once.  Hence, as $\eta \ll \eps_0$, we get
\[
 |\mathcal T\setminus H|
 \le\sum_{D\in\binom Ud}b_D
 \le\sum_{D\in\binom Ud}c_D
 =\binom kd e_H(U) \le \eps_0 n^k.
\]
Thus $H$ is $\eps_0$-contained in $ H_k^r(U,W)$ and thus \cref{lem:LYY-close} yields a perfect matching in $H$.
%
\end{proof}

\section{Almost Perfect Matching}

We need the following elementary result on the density of sets near extremal size.
\begin{lemma}\label{lem:robust-density}
Let $N\in k\mathbb N$, set $\beta:=1-1/k$, and let $H$ be an $N$-vertex $k$-graph
which is not $\eps$-space-extremal.  If $0<\zeta\le\eps/(2k)$, then every
set $A\subseteq V(H)$ satisfying $|A|\ge(\beta-\zeta)N$
spans at least $e_H(A)\ge(\eps-k\zeta)\binom Nk
 \ge\frac\eps2\binom Nk$
edges.
\end{lemma}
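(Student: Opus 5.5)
The plan is a pigeonhole argument over subsets of size exactly $\beta N$, using the fact that being not $\eps$-space-extremal means every set $B$ with $|B|=\beta N=(1-1/k)N$ spans more than $\eps\binom Nk$ edges.

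Set $b:=\beta N$, and let $A$ satisfy $|A|\ge(\beta-\zeta)N$. There are two cases.

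If $|A|\ge b$, pick any $B\subseteq A$ with $|B|=b$. Monotonicity then gives $e_H(A)\ge e_H(B)>\eps\binom Nk$, which is stronger than needed.

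If $|A|<b$, write $t:=b-|A|\le\zeta N$. Choose any set $S\subseteq V(H)\setminus A$ with $|S|=t$; this is possible since $|V\setminus A|\ge N-b\ge t$. Put $B:=A\cup S$, so $|B|=b$ and $e_H(B)>\eps\binom Nk$. Every edge of $H[B]$ not lying inside $A$ meets $S$, so
\[
 e_H(A)\ge e_H(B)-t\binom{N-1}{k-1}.
\]
Since $\binom{N-1}{k-1}=\frac kN\binom Nk$ and $t\le\zeta N$, the subtracted term is at most $k\zeta\binom Nk$. Hence $e_H(A)\ge(\eps-k\zeta)\binom Nk$.

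Finally, $\zeta\le\eps/(2k)$ gives $k\zeta\le\eps/2$, so $(\eps-k\zeta)\binom Nk\ge\frac\eps2\binom Nk$.

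The only points needing care are integrality and counting. In the counting step I bound the number of $k$-sets meeting $S$ by $|S|\binom{N-1}{k-1}$ via the union bound, which is enough. The sizes are integral because $N\in k\mathbb N$ makes $\beta N$ an integer, and $t$ is an integer because $|A|$ is. The non-extremality hypothesis is applied only to sets of size exactly $\beta N$, matching Definition~\ref{def:space-extremal}. No step here is a genuine obstacle.
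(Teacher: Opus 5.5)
Your proof is correct and follows the paper's argument. Like the paper, you pad $A$ by at most $\zeta N$ vertices to a set $B$, apply non-extremality to $B$, and subtract at most $\zeta N\binom{N-1}{k-1}=k\zeta\binom Nk$ edges meeting the added vertices. Your separate treatment of the case $|A|\ge\beta N$, passing to a subset of size exactly $\beta N$, is slightly more careful than the paper's ``a set of size at least $\beta N$''. That care is warranted, since the definition of space-extremality concerns sets of size exactly $\beta N$.
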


\begin{proof}
By adding up to $\zeta N$ vertices if necessary, let $B\supseteq A$ be a set of size at least $\beta N$.
As $H$ is not $\eps$-space-extremal, we obtain that 
\[
\eps \binom Nk\le  e_H(B)\le e_H(A)+\zeta N\binom{N-1}{k-1}
 =e_H(A)+k\zeta\binom Nk,
\]
which gives the result.
\end{proof}


We remark that the combination of the following two results is the main ingredient for determining the sharp minimum degree thresholds for perfect matchings which has been missing for the past nearly two decades, and is possible now using Theorem~\ref{CLZ}.
The following result says that a non-space-extremal $k$-graph with slightly weakened minimum $d$-degree conditions contains a perfect fractional matching.
{For perfect fractional matchings, it is convenient to work with the following notion of monotone hypergraphs.

{Given a $k$-graph $H$ defined on $[n]$, for every two $k$-sets $S=\{s_1,\ldots,s_k\}$ and $S'=\{s_1',\ldots,s_k'\}$ where $s_1\le\cdots\le s_k$ and $s_1'\le\cdots\le s_k'$, write $S\preceq S'$ if
$s_i\le s_i'$ for every $i\in[k]$.
A $k$-graph $H$ on $[n]$ is called \emph{monotone} 
if $S'\in E(H)$ and $S\preceq S'$ imply $S\in E(H)$.
Equivalently, whenever $e\in E(H)$, $j\in e$, $i\notin e$,
and $i<j$, we have
$(e\setminus\{j\})\cup\{i\}\in E(H)$.}

\begin{lemma}\label{main-frac}
Fix integers \(k\ge3\) and \(d\in[k-2]\) with $ (1-1/k)^{k-d} < 1/2$, and let
$0<\beta\ll\alpha\ll\varepsilon,\gamma,1/k$.
Then, for every sufficiently large \(n\in k\mathbb N\), the following
holds.
Let \(H\) be a {monotone} \(k\)-graph on vertex set \([n]\) which is not
\(\varepsilon\)-space-extremal. Suppose that
$\delta_1(H)>\gamma n^{k-1}$,
and that there exists a set \(S\subseteq V(H)\) with
$|S|\le\beta n$
such that
\[
\delta_d(H-S)
\ge
\binom{n-d}{k-d}
-
\binom{n-n/k}{k-d}
-
\alpha n^{k-d}.
\]
Then \(H\) admits a perfect matching.
\end{lemma}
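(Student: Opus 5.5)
The plan is the standard absorbing strategy, in the order: cover $S$, set aside an absorber, find an almost perfect matching in what remains, absorb the leftover. Throughout, set $c:=c^{\mathrm{sp}}_{k,d}=1-(1-1/k)^{k-d}$, which exceeds $1/2$ by hypothesis, and write $c=\tfrac12+4\gamma_0$ with $\gamma_0>0$ depending only on $k,d$. Choose the remaining constants in the hierarchy $\beta\ll\alpha\ll\gamma'\ll\varepsilon,\gamma,\gamma_0$.

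\textbf{Step 1 (cover $S$).} Using $\delta_1(H)>\gamma n^{k-1}$, I would greedily pick disjoint edges $e_v\ni v$ for $v\in S$. At each step at most $k\beta n$ vertices are used, and these lie in at most $k\beta n\cdot n^{k-2}\ll\gamma n^{k-1}$ edges through $v$, so the greedy choice never gets stuck. Let $M_S$ be the resulting matching, $S':=V(M_S)\supseteq S$ with $|S'|\le k\beta n$, and $H_1:=H-S'$. Deleting $S'\setminus S$ from $H-S$ costs each $d$-set at most $k\beta n^{k-d}$ edges, so
\[
\delta_d(H_1)\ge c\binom{n-d}{k-d}-2\alpha n^{k-d}\ge\Bigl(\tfrac12+3\gamma_0\Bigr)\binom{n_1-d}{k-d},
\]
where $n_1:=|V(H_1)|$. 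Removing $\le k\beta n$ vertices changes $e_H(B)$ for $|B|\approx(1-1/k)n$ by $O(\beta n^k)$, so by Lemma~\ref{lem:robust-density} $H_1$ is not $(\varepsilon/2)$-space-extremal.

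\textbf{Step 2 (absorber).} I would apply Lemma~\ref{lem:absorber} to $H_1$ with parameter $\gamma'\le\gamma_0$, obtaining $X$ with $|X|\le\gamma'^k n$ that absorbs any $W\subseteq V(H_1)\setminus X$ with $|W|\in k\mathbb N$ and $|W|\le\gamma'^{2k}n$. Set $H_2:=H_1-X$. Its $d$-degree loses only $O(\gamma'^k)\binom{n-d}{k-d}$, and by Lemma~\ref{lem:robust-density} it is still not $(\varepsilon/4)$-space-extremal. Hence $\delta_d(H_2)\ge(c-\alpha')\binom{n_2-d}{k-d}$ with $\alpha'\ll\varepsilon$, where $n_2:=|V(H_2)|$.

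\textbf{Step 3 (almost perfect matching, main obstacle).} First I would show that every induced subgraph $H_2[R]$ on a random set $R$ of $\Theta(\log n)$ or constant-size-scaled vertices has $\nu^*$ close to $|R|/k$; equivalently, that $\tau^*$ of $H_2$ and of its large random induced subgraphs is at least $(1-\sigma)|V|/k$. This is the key use of Theorem~\ref{CLZ}.

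Suppose instead $\nu^*(H_2)<(1-\sigma)n_2/k$. By a standard argument (a fractional cover of small weight, or Theorem~\ref{rainfracmat} applied to random subfamilies) one derives an integer bound $\nu(H')\le m$ with $m\le n'/(k+1)$ in a suitable blow-down or subsample $H'$ of $H_2$, while $|H'|$ is at least $\binom{n'}{k}-\binom{n'-m}{k}-\xi n'^k$. The lower bound on $|H'|$ comes from the $d$-degree condition, via $|H'|\ge\binom{n'}{d}\delta_d/\binom{k}{d}$ and Theorem~\ref{cor:fractional-threshold} for the numerics. Theorem~\ref{CLZ} then says $H'$ is $\eta$-contained in $H_1(n',m)$. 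Pulling back, $H_2$ has a set of size $\approx(1-1/k)n_2$ spanning $o(n^k)$ edges, which contradicts Step 2.

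The delicate point is the parameter regime. Theorem~\ref{CLZ} needs $m\le n/(k+1)$, so I would apply it to $H_2$ after deleting a small random set, or to $H_2$ with $n$ replaced by $(1+1/k)$-scaled counts, and verify the edge count matches $\binom{n}{k}-\binom{n-m}{k}$ up to $\xi n^k$.

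Given the fractional statement, I would split $V(H_2)$ into random parts $R_1,\dots,R_t$ of size $\Theta(\sqrt n)$; Chernoff (Lemma~\ref{chernoff}) preserves the degree conditions and non-extremality in each part. In each part, I would take an almost perfect fractional matching and assemble these into an auxiliary almost regular $k$-graph $J$ with small codegrees, either by sampling edges with probabilities proportional to the weights or by using Theorem~\ref{rainfracmat} to get edge-disjoint fractional matchings. Lemma~\ref{lem:FR} then yields a matching $M'$ of $H_2$ leaving at most $\gamma'^{2k}n$ vertices uncovered. Call the uncovered set $W$.

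\textbf{Step 4 (absorb).} Since $n$, $|S'|$, $|X|$ and $|V(M')|$ are all accounted for, $|W|\equiv n-|S'|-|X|-|V(M')|$. Here $n$, $|S'|$ and $|V(M')|$ are divisible by $k$, and $|X|$ is divisible by $k$ because $X$ itself has a perfect matching (take $W=\varnothing$). So $|W|\in k\mathbb N$, and the absorbing property gives a matching on $X\cup W$. Together with $M_S\cup M'$, this is a perfect matching of $H$. Monotonicity of $H$ is used only to simplify Step 3: compressions keep the fractional cover LP and the random-part arguments clean, since shifted families attain $\tau^*$ with monotone weights.
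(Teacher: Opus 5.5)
There is a genuine gap, and it is in your Step 3, which carries the whole content of the lemma.

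\textbf{Step 3 fails as proposed.} You want to apply Theorem~\ref{CLZ} in uniformity $k$ to $H_2$ (or a subsample), with the edge count coming from $|H_2|\ge\binom{n_2}{d}\delta_d(H_2)/\binom kd$. This cannot work numerically. The $d$-degree condition only gives $e(H_2)\approx c\binom{n_2}{k}$ with $c=1-(1-1/k)^{k-d}$. But Theorem~\ref{CLZ} with $m\approx n_2/k$ needs about $\bigl(1-(1-1/k)^{k}\bigr)\binom{n_2}{k}$ edges, which is strictly larger for every $d\ge1$; for $k=3$, $d=1$ it is $5/9$ versus $19/27$.

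Independently, Theorem~\ref{CLZ} in uniformity $k$ requires $m\le n/(k+1)$, while you need $m\approx n/k$. Deleting a small random set does not change $m/n$ by a constant. Padding with isolated vertices only raises the required count $\binom{n'}{k}-\binom{n'-m}{k}$.

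So Step 3 never produces the near-perfect (fractional) matching, and the absorbing framework cannot compensate. After removing $X$, the degree deficiency is of order $\alpha+\gamma'^k$, but you may leave only $\gamma'^{2k}n$ vertices uncovered. Without stability, a deficiency of $\alpha'$ can genuinely leave $\Theta(\alpha' n)$ vertices uncovered (a slightly shrunk space barrier). You would therefore need a stability statement: non-extremal and slightly sub-threshold implies a leftover much smaller than the deficiency. In the paper that statement is Lemma~\ref{lem:almost-perfect}, which is derived from the very lemma you are proving, so it cannot be borrowed.

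\textbf{The missing idea.} Apply Theorem~\ref{CLZ} to a link rather than to $H$. This is where monotonicity is essential, not a mere simplification as your last sentence suggests.

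\begin{enumerate}
\item After covering $S$ greedily by $M_0$ with $l=|M_0|$ (as you do), let $D$ be the $d$ largest labels of $H_1=H-V(M_0)$. Let $G=N_{H_1}(D)$, an $r$-graph with $r=k-d$ on $N=n-kl-d$ vertices.
\item Now the numbers match: $m=n/k-l-1\le N/(r+1)$ and $e(G)\ge\binom Nr-\binom{N-m}r-\xi N^r$.
\item Monotonicity converts a matching of this single link into edges of $H$. For any matching $h_1,\dots,h_t$ of $G$ and any disjoint $d$-sets $h_i'$ of leftover vertices of $H_1$, every element of $h_i'\setminus D$ is smaller than every element of $D$. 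Hence $h_i\cup h_i'\preceq h_i\cup D$, so $h_i\cup h_i'\in E(H)$.
\item Consequently, either $\nu(G)\ge n/k-l$ and $H$ has a perfect matching outright, or Theorem~\ref{CLZ} shows $G$ is close to $H_r(N,m)$.
\item In the second case, non-extremality (Lemma~\ref{lem:robust-density}) lets you greedily take a small matching $M_1$ avoiding most of the $m$-set. Lemma~\ref{good} then gives a matching of the right size in the remaining link, and monotonicity completes it to a perfect matching.
\end{enumerate}

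No absorber is needed.
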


\begin{proof}
Choose \(0<\eta\ll\varepsilon,\gamma,1/k\).
Applying Theorem~\ref{CLZ} with  \(r=k-d\),
\(a=1/(2k)\), and parameter \(\eta\), let
\(\xi>0\) denote the corresponding constant
 supplied by Theorem~\ref{CLZ}.
Choose
\[
1/n\ll\beta\ll\alpha\ll\xi\ll\eta
\ll\varepsilon,\gamma,1/k.
\]
Let $H$ be a monotone $k$-graph on $[n]$ with properties stated in the lemma.

 \medskip
\begin{claim}  
\(H\) admits a matching \(M_0\) that covers the set \(S\).
\end{claim}
\begin{proof}
We greedily construct such a matching in $H$.
Suppose that $M$ is the matching constructed so far,
with every edge of $M$ intersecting $S$, and that $M$ does not yet cover all
vertices of $S$. 
Choose a vertex $x\in S\setminus V(M)$.
Since \(|M|\le |S|\le \beta n\), we have $|V(M)|\le k\beta n$.
The number of edges containing \(x\) and intersecting \(V(M)\) is at most
$|V(M)|\binom{n-2}{k-2}
\le k\beta n^{k-1}
<\gamma n^{k-1}
<d_H(x)$ since $\beta\ll\gamma$.
Hence there is an edge \(e\in E(H)\) containing \(x\) and disjoint from
\(V(M)\). Add \(e\) to \(M\) and continue.
At each step at least one new vertex of \(S\) is covered, so the procedure
terminates after at most \(|S|\) steps and yields a matching covering \(S\).
\end{proof}

Let $l:=|M_0|$. 
Let $H_1:=H-V(M_0)$ and let $D\subseteq V(H_1)$ consist of the $d$ vertices of $H_1$ with largest labels. Let $G:=N_{H_1}(D)$, which is a $(k-d)$-graph on a set of $n-kl-d$ vertices.

\begin{claim}
\label{clm:clpm}
If $\nu(G)\geq n/k-l$, then $H$ contains a perfect matching.
\end{claim}
\begin{proof}
Let $M:=\{e_1'\ldots,e_{n/k-l}'\}$ be a matching of size $n/k-l$ of $G$.  
We partition $[n]\setminus V(M_0\cup M)$ into $n/k-l$ vertex-disjoint $d$-subsets, $e_1,\ldots,e_{n/k-l}$. 
Because vertices in $D$ have the smallest weights, by the monotonicity of $H$, we have $e_i\cup e_i'\in E(H)$ for all $i\in [n/k-l]$.
Then $M_0\cup \{e_i\cup e_i'\ :\ i\in [n/k-l]\}$ is a perfect matching of $H$.  
\end{proof}

By Claim \ref{clm:clpm}, we may assume that $\nu(G)\leq n/k-l-1$ and shall apply Theorem~\ref{CLZ} to $G$. 
Fix $m:=n/k-l-1$, $r:=k-d$ and $N:=n-kl-d$. 
Since \(l\le |S|\le\beta n\), for sufficiently large \(n\),
$N/2k\le m\le N/k\le N/(r+1)$.
Moreover, since \(S\subseteq V(M_0)\) and \(1/n\ll\beta\ll\alpha\ll\xi\), we have
\begin{align*}
e(G)=\deg_{H_1}(D)
\ge \delta_d(H-S)
   -|V(M_0)\setminus S|\binom{n-d-1}{r-1}
\ge
\binom Nr-\binom{N-m}{r}-\xi N^r.
\end{align*}
Therefore, Theorem~\ref{CLZ} implies that
\(G\) is \(\eta\)-contained in \(H_r(N,m)\) with an underlying vertex partition $U\dot\cup W$ with $|U|=m$. 
By averaging, $G$ contains at most $r\eta^{1/2}N$ $\eta^{1/2}$-bad  vertices with respect to $H_{r}(N,m)$,  denoted by $B$. 

Choose
$U'\subseteq U\setminus B$ with $
|U'|=n/k-r\eta^{1/2}n-l$. Then we have $|V(H_1)\setminus U'|\ge \left(1-1/k\right)n$.
By Lemma \ref{lem:robust-density}, we have $e(H_1-U')\ge \frac{\eps}{2}\binom{n-kl}{k}$, which yields a matching $M_1$ of size $r\eta^{1/2}n$ in $H_1-U'$ via greedy choices. 
Now every vertex of $G-V(M_1)-B$ is $2\eta^{1/2}$-good with respect to $H_{r}(N-|B\cup V(M_1)|,|U'|)$. 
So we can apply Lemma \ref{good} to $G-V(M_1)-B$ and get a matching $M_2$ of size $t:=n/k-r\eta^{1/2}n-l$. 
Let $M_2:=\{h_1,\ldots,h_t\}$ and partition  $[n]-V(M_0\cup M_1\cup M_2)$ arbitrarily into $d$-sets $h_1',\ldots,h_t'$. 
Let $M_2':=\{h_i\cup h_i': i\in [t]\}$, which is a matching in $H$ by definition.
 It follows that $M_0\cup M_1\cup M_2'$ is a perfect matching of $H$. This completes the proof. \end{proof}

The following lemma helps us drop the monotonicity assumption at the price of getting an almost perfect matching, whose idea originated from Han--Kohayakawa--Person~\cite{HKP21}.
\begin{lemma}\label{lem:almost-perfect}
Fix integers \(k\ge3\) and \(d\in[k-2]\) with with $ (1-1/k)^{k-d} < 1/2$, and let
$0<\rho_0\ll\rho_1\ll\rho_2\ll 1/k$.
Then, for every sufficiently large \(n\in k\mathbb N\), the following
holds. Let \(H\) be an \(n\)-vertex \(k\)-graph such that \[\delta_d(H)>\binom{n-d}{k-d}
-\binom{n-n/k+1-d}{k-d}-\rho_1 n^{k-d}.\]
If \(H\) is not \(\rho_2\)-space-extremal, then $H$ admits a matching covering all but at most
$\rho_0 n$ vertices.
\end{lemma}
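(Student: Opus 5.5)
We follow the Han--Kohayakawa--Person strategy: a random partition, rainbow fractional matchings on a reduced family of small random subsets, and finally the Frankl--R\"odl nibble. Fix an integer $K$ with $1/K$ small compared with $\rho_1,\rho_2$ but $\rho_0\ll 1/K$ (more precisely $\rho_0\ll 1/K\ll\rho_1$). Write $\ell:=K$ and let $\mathcal S$ be the family of $\ell$-subsets of $V(H)$ we obtain as follows. Choose a random family of about $n^{1.1}$ sets $S\in\binom{V(H)}{\ell}$, independently and uniformly. By Chernoff (\cref{chernoff}) and the second moment, with positive probability:
\begin{enumerate}
\item every vertex lies in $(1\pm n^{-0.01})D$ of the sets, where $D\approx n^{0.1}\ell$;
\item every pair of vertices lies in $O(1)$ of the sets, hence in $o(D)$;
\item all but an $o(1)$-fraction of the sets $S$ are \emph{typical}: each has $\delta_d(H[S])\ge\binom{\ell-d}{k-d}-\binom{\ell-\ell/k+1-d}{k-d}-2\rho_1\ell^{k-d}$ up to a few bad $d$-sets, and $H[S]$ is not $\rho_2/2$-space-extremal.
\end{enumerate}
Item (3) is the standard hypergeometric concentration for degrees and for edge counts of all near-$(1-1/k)\ell$-sets, combined with a union bound over subsets of $S$ (note $\ell$ is constant). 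Lemma~\ref{lem:robust-density} is used to pass non-extremality to every large subset robustly. Typicality should be phrased as: there is a set $T_S\subseteq S$ with $|T_S|\le\beta\ell$ such that $\delta_d(H[S]-T_S)$ satisfies the degree bound with error $\alpha\ell^{k-d}$, and $\delta_1(H[S])>\gamma\ell^{k-1}$.

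The key step is to show that each typical $H[S]$ has a perfect fractional matching, so $\nu^*(H[S])=\ell/k$, after deleting a bounded number of vertices to make $k\mid\ell$. For this, apply the usual shifting (compression) operation to $H[S]$. Shifting preserves the minimum $d$-degree condition outside the shifted small set, and does not increase $\nu^*$: a fractional cover of the shifted graph pulls back, the standard fact that shifting does not increase $\tau^*$. It also preserves non-extremality up to constants; this needs a small argument, either by applying \cref{CLZ}-type stability or by noting that the $e_H(B)$ lower bound for all $B$ of size $(1-1/k)\ell$ is shift-invariant in the right direction. We then apply \cref{main-frac} with parameters $\beta\ll\alpha\ll\rho_2/2,\gamma$. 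It yields a perfect matching of the shifted graph, hence $\nu^*$ of the shifted graph is $\ell/k$, and so $\nu^*(H[S])=\ell/k$. We expect this transfer through shifting, in particular keeping both the degree hypothesis and non-extremality intact, to be the main obstacle. If non-extremality is not preserved, we would instead argue by contradiction: an extremal shifted graph, combined with $\tau^*$ invariance, forces a small fractional cover in $H[S]$ that is concentrated on about $\ell/k$ vertices, and this contradicts non-extremality of $H[S]$.

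Given the fractional matchings $\varphi_S$ of weight $\ell/k$ on the typical $S$, we discard the atypical sets, which affects only $o(D)$ sets per vertex after removing $o(n)$ heavily affected vertices. We then form an auxiliary $k$-graph $J$ by choosing, for each typical $S$, edges of $H[S]$ with probability proportional to $\varphi_S$. Equivalently, and with integrality, we use \cref{rainfracmat} or a rounding of $\varphi_S$ to rational weights with denominator $M$ and take $M\varphi_S(e)$ copies. Every vertex then has $J$-degree $(1\pm\tau)MD$, since $\sum_{e\ni v}\varphi_S(e)=1$ for each $S\ni v$, and codegrees are $O(M)=o(MD)$ by item (2). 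Finally, \cref{lem:FR} gives a matching of $J$, which is a matching of $H$, covering all but $\sigma n\le\rho_0 n$ vertices, once we account for the $o(n)$ removed vertices.
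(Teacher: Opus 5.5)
There is a genuine gap, and it is at the step you flagged yourself: making $H[S]$ monotone by shifting. The inequality $\nu^*(S_{ij}(H))\le\nu^*(H)$ that you need is true; it follows from an exchange of load between $i$ and $j$. Pulling back a cover of the shifted graph would instead prove the opposite inequality. The real problem is that the hypotheses of Lemma~\ref{main-frac} do not survive shifting.

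Take $i<j$, a $d$-set $D\ni j$ with $i\notin D$, and put $D'=(D\setminus\{j\})\cup\{i\}$. After $S_{ij}$, the only edges through $D$ are:
\begin{itemize}
\item those containing $D\cup\{i\}$;
\item those $e\supseteq D$ with $i\notin e$ for which $(e\setminus\{j\})\cup\{i\}$ is also an edge.
\end{itemize}
If the links of $D$ and $D'$ in $V\setminus(D\cup D')$ overlap as little as possible, this leaves roughly $\deg(D)+\deg(D')-\binom{n-d}{k-d}\approx(2c-1)\binom{n-d}{k-d}$. That is well below $c\binom{n-d}{k-d}$.

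Shifting moves degree from late vertices to early ones. The last $d$-set is exactly where a monotone graph has its minimum $d$-degree, and it is what the proof of Lemma~\ref{main-frac} uses. Similarly, shifting pushes edges toward an initial segment, i.e.\ toward the space barrier itself, so the shifted graph may well be extremal. Your fallback cannot repair this. Shifting only gives $\tau^*(\text{shifted})\le\tau^*(H[S])$, so an extremal shifted graph says nothing about fractional covers of $H[S]$.

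The missing idea is the paper's closure:
\begin{itemize}
\item take an optimal fractional vertex cover $\omega$;
\item order the vertices by non-increasing $\omega$;
\item pass to $\{e:\omega(e)\ge1\}$.
\end{itemize}
This graph is monotone, contains the original graph, and has the same $\tau^*=\nu^*$, because $\omega$ still covers it. Since it only \emph{adds} edges, the $d$-degree, $\delta_1$ and non-extremality hypotheses transfer for free.

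Two further repairs are needed in your sampling scheme.

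\emph{Non-extremality of $H[S]$.} For constant $\ell=|S|$ this does not follow from concentration plus a union bound over subsets of $S$. Suppose $H$ has a set $B_0$ of size $(1-1/k)n$ all of whose internal edges meet some $Z\subseteq B_0$ with $|Z|=\Theta(\rho_2n)$. Then the expected number of edgeless $(1-1/k)\ell$-subsets of $S$ is roughly $k^{\ell/k}e^{-O(\rho_2)\ell}$, which is exponentially large in $\ell$. Yet the probability that one exists is only $e^{-\Omega(\rho_2^2\ell)}$. So you need a genuine sampling theorem for this min-density parameter, of weak-regularity or Alon--de la Vega--Kannan--Karpinski type.

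\emph{Parameter hierarchy.} For constant $\ell=K$, the proportion of atypical sets (also among those through a fixed vertex) is a constant depending on $K$, not $o(1)$. Keeping $J$-degrees within $(1\pm\tau)MD$ and the loss below $\rho_0n$ therefore requires $1/K\ll\rho_0$, the reverse of your hierarchy.

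With these three changes your Alon et al.-style scheme should go through. The paper avoids sampling altogether. It applies the closure to $H_s$ on the full vertex set and extracts $\lfloor n/\ln n\rfloor$ edge-disjoint perfect fractional matchings. At each step it deletes earlier supports and edges through pairs of load above $2$. This removes only $o(n^k)$ edges, so non-extremality passes trivially, and the $\rho_0n$ lowest-weight vertices supply the exceptional set for Lemma~\ref{main-frac}. Randomly rounding $\sum_i f_i$ then gives degrees $\sim n/\ln n$ and codegrees at most $\ln n$, which is what Frankl--R\"odl needs.
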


\begin{proof}  We start with the following claim.
\begin{claim}
\(H\) contains \(\lfloor n/\ln n\rfloor\) edge-disjoint perfect fractional matchings \(f_1,\dots,f_{\lfloor n/\ln n\rfloor}\) satisfying
$
\sum_{i=1}^{[\lfloor n/\ln n\rfloor]}\sum_{\{x,y\}\subseteq e} f_i(e) \le 3$
for every pair \(\{x,y\}\in \binom{V(H)}{2}\).
\end{claim}
\begin{proof}

Let $q=\lfloor n/\ln n\rfloor$.
We construct the perfect fractional matchings $f_1,\ldots,f_q$ inductively.
Set $E_0=T_0=\emptyset$.
For $s\ge0$, suppose that $f_1,\ldots,f_s$ have already been constructed, where for $s=0$ this denotes the empty family, such that
$|E_i|\le n$ for every $i\in[s]$ and
$L_s(A):=\sum_{i=1}^s\sum_{A\subseteq e}f_i(e)\le3$
for every $A\in\binom{V(H)}2$, where
$E_i:=\{e\in E(H):f_i(e)>0\}$.
Set $T_s:=\{A\in\binom{V(H)}2:L_s(A)>2\}$ and
$E'_s:=\{e\in E(H):A\subseteq e\text{ for some }A\in T_s\}$.
Define $H_s$ by
$E(H_s):=E(H)\setminus\bigl((\bigcup_{i=1}^sE_i)\cup E'_s\bigr)$.
In particular, $H_0=H$.
We shall show that, for every $0\le s<q$, $H_s$
admits a perfect fractional matching $f_{s+1}$.

{
Let $\omega:V(H_s)\rightarrow [0,1]$ be a minimum fractional vertex cover of $H_s$ with $\omega(1)\ge\omega(2)\ge\cdots\ge\omega(n)$ (we relabel $V(H)$ if needed). 
Define the monotone \(k\)-graph \(cl(H_s)\) on vertex set \(V(H)\) by
\[
E(cl(H_s))
:=
\left\{
e\in\binom{V(H)}k:\omega(e)\ge1
\right\}.
\]
Note that 
i) $\omega$ is also a fractional vertex cover of $cl(H_s)$, that is, $\nu^*(cl(H_s))=\tau^*(cl(H_s))\le \sum_{x\in [n]}\omega(x)=\nu^*(H_s)$, where we use the duality theorem, and 
ii) $H_s$ is a subgraph of $cl(H_s)$, giving $\nu^*(H_s)\le \nu^*(cl(H_s))$.
Putting together, we obtain $\nu^*(H_s)=\nu^*(cl(H_s))$.
}

We show that \(cl(H_{s})\) satisfies the other assumptions of Lemma \ref{main-frac} and thus has a perfect matching.
First, we have $\sum_{i\in [s]}|E_i|\le sn\le n^2/\ln n$.
Viewing $T_s$ as a graph, its maximum degree is at most $(k-1)s/2\le ks$ -- for $v\in V(H)$, the sum of weights of pairs containing $v$ is upper bounded by $(k-1)s$, because each edge weight is counted $k-1$ times towards the sum; thus the number of pairs with weight larger than 2 is at most $s(k-1)/2$. 
Therefore, every vertex is incident to at most $s(k-1)n\binom{n-3}{k-3}/2+s(k-1)\binom{n-2}{k-2}/2\le kn^{k-1}/\ln n$ edges in $E_s'$, and thus $|E_s'|\le n^{k}/\ln n$.
So we obtain $|\bigcup_{i=1}^s E_i\cup E_s'|\le n^2/\ln n+n^{k}/\ln n$ and $\delta_1(cl(H_s))\ge \delta_1(H_s)\ge \delta_1(H) - n^2/\ln n - n^{k-1}/\ln n \ge \binom{n-1}{k-1}/k$.
Note that
$|E(H)\setminus E(cl(H_s))|\le |\bigcup_{i=1}^s E_i\cup E_s'| \le n^2/\ln n+ n^k/\ln n$.
As $H$ is not $\rho_2$-space-extremal, \(cl(H_{s})\) is not $\rho_2/2$-space-extremal.

It remains to verify the $d$-degree assumption.
Let $Q:=\{n-\rho_0n+1,\ldots,n\}$. 
Note that $|T_s|\leq ksn\le kn^2/\ln n$. 
For $d\geq 2$, the number of $d$-subsets $R$ with ${R\choose 2}\cap T_s\neq \emptyset$ is at most ${n-2\choose d-2}k^2n^2/\ln n<{|Q|\choose d}$. 
So there exists $A\in {Q\choose d}$ such that ${R\choose 2}\cap T_s=\emptyset$. 
Now we claim
\[
\delta_d\bigl(cl(H_s)-(Q\setminus A)\bigr)
>
\binom{n-d}{k-d}
-\binom{n-n/k+1-d}{k-d}
-2\rho_1n^{k-d}.
\]
%
Indeed, note that \(Q\) consists of the vertices of smallest weights
and \(A\in\binom Qd\).
By the definition of \(cl(H_s)\), this implies
$\delta_d\bigl(cl(H_s)-(Q\setminus A)\bigr)
=
d_{cl(H_s)-(Q\setminus A)}(A) \ge d_{H_s-(Q\setminus A)}(A)$.
Since \(\binom A2\cap T_s=\emptyset\), every pair in \(T_s\)
contained in an edge containing \(A\) either meets \(A\) in exactly one
vertex or is disjoint from \(A\). 
Hence
\begin{align*}
d_H(A)-d_{H_s-(Q\setminus A)}(A)
\le
|Q|\binom{n-d-1}{k-d-1}
+\sum_{i=1}^s|E_i|
+
ksd\binom{n-d-1}{k-d-1}
+
|T_s|\binom{n-d-2}{k-d-2},
\end{align*}
where the four terms correspond respectively to edges lost by
deleting \(Q\setminus A\), edges belonging to some \(E_i\), edges
containing a pair of \(T_s\) meeting \(A\) in one vertex, and edges
containing a pair of \(T_s\) disjoint from \(A\).
Since
$|Q|=\rho_0n$,
$s\le n/\ln n$,
$\sum_{i=1}^s|E_i|\le sn$,
$|T_s|\le ksn$,
we obtain
$
d_H(A)-d_{H_s-(Q\setminus A)}(A)
<
\rho_1n^{k-d}$.
 The claim follows from the lower bound
on \(\delta_d(H)\).

Applying Lemma \ref{main-frac}, \(cl(H_{s})\) has a  perfect matching, which yields that $H_s$ has a perfect fractional matching \(f_{s+1}\). By Theorem~\ref{rainfracmat}, we may choose $f_{s+1}$ such that 
\[\left|\{e\in E(H_s):f_{s+1}(e)>0\}\right|<n.\]
Repeating this iterative procedure, we obtain \(n/\ln n\) edge-disjoint perfect fractional matchings with the pair condition preserved.
\end{proof}

Returning to the proof of the lemma, let \(h:=\sum_{i=1}^{n/\ln n} f_i\). 
We construct a random subgraph \(R\subseteq H\) by including each edge \(e\in E(H)\) with probability \(h(e)\). 
The disjointness guarantees that $h(e)\in [0,1]$ for all $e$.
For any vertex \(x\in V(H)\), we have \(\mathbb{E}[\deg_R(x)] = n/\ln n\); for any distinct pair \(x,y\in V(H_1)\), \(\mathbb{E}[\deg_R(\{x,y\})] \le 3\). 
By Lemma \ref{chernoff}, with probability \(1-o(1)\) there exists a spanning subgraph \(F\subseteq H\) satisfying:
\begin{itemize}
    \item For every vertex \(x\in V(H_1)\), \(\deg_F(x) \sim n/\ln n\) and \(\mathbb{E}[\deg_F(x)] = n/\ln n\);
    \item For every pair of distinct vertices \(x,y\in V(H_1)\), \(\deg_F(\{x,y\}) \le \ln n\).
\end{itemize}
By Lemma \ref{lem:FR}, \(H\) contains a matching \(M_1\) such that \(|V(H)\setminus V(M_1)|\le \rho_0 n\). 
\end{proof}

\section{The non-extremal case}\label{sec:nonextremal}

The main purpose of this section is to establish the following result.

\begin{theorem}[Non-extremal case]\label{thm:nonextremal}
Fix integers $k\ge3$ and $d\in[k-1]$, and assume that $c=1-\left(1-\frac1k\right)^{k-d}>\frac12$.
For every $\eps>0$, there exist $\gamma_{\mathrm{ne}}>0$ and $n_0$ such that the following holds for $n\ge n_0$ and $n\in k\mathbb N$.  
If $H$ is an $n$-vertex $k$-graph $H$ such that $\delta_d(H)\ge(c-\gamma_{\mathrm{ne}})
 \binom{n-d}{k-d}$
and is not $\eps$-space-extremal, then $H$ contains a perfect matching.
\end{theorem}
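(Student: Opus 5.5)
We will follow the standard absorbing scheme announced in the proof strategy: reserve an absorbing set, find an almost perfect matching in the rest via Lemma~\ref{lem:almost-perfect}, and absorb the leftover vertices. Throughout, choose constants
\[
1/n\ll\gamma_{\mathrm{ne}}\ll\rho_0\ll\rho_1\ll\gamma\ll\rho_2\ll\eps,\ c-\tfrac12,\ 1/k,
\]
arranged so that $\rho_0\le\gamma^{2k}/2$ and $\gamma^k\ll\rho_1,\rho_2$.

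\emph{Step 1 (absorbing set).} Since $c>1/2$, we may take $\gamma_{\mathrm{ne}}$ and $\gamma$ small enough that $c-\gamma_{\mathrm{ne}}\ge\frac12+2\gamma$. Lemma~\ref{lem:absorber} then gives an absorbing set $X\subseteq V(H)$ with $|X|\le\gamma^kn$. If necessary we adjust $X$ so that $|X|\in k\mathbb N$; this is automatic when $X$ is a union of absorbing $k$-matchings.

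\emph{Step 2 (almost perfect matching in $H':=H-X$).} Set $n'=n-|X|\in k\mathbb N$. Deleting $X$ reduces any $d$-degree by at most $|X|\binom{n}{k-d-1}\le k\gamma^k n^{k-d}$. Hence
\[
\delta_d(H')\ge(c-\gamma_{\mathrm{ne}})\binom{n'-d}{k-d}-k\gamma^kn'^{\,k-d}
>\binom{n'-d}{k-d}-\binom{n'-n'/k+1-d}{k-d}-\rho_1 n'^{\,k-d},
\]
using that $s_d(k,n')/\binom{n'-d}{k-d}=c+o(1)$ and that $\gamma_{\mathrm{ne}},\gamma^k\ll\rho_1$.

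Next we check that $H'$ is not $\rho_2$-space-extremal. Suppose $B'\subseteq V(H')$ had $|B'|=(1-1/k)n'$ and $e(B')\le\rho_2\binom{n'}k$. Then $B'$ has size at least $(1-1/k-\gamma^k)n$ inside $V(H)$. Lemma~\ref{lem:robust-density}, applied with $\zeta=\gamma^k\le\eps/(2k)$, forces $e_H(B')\ge\frac\eps2\binom nk>\rho_2\binom{n'}{k}$, a contradiction.

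For $d=k-1$, Lemma~\ref{lem:almost-perfect} is not stated. In that case $c=1/k<1/2$ for $k\ge3$, so the hypothesis $c>1/2$ excludes it and only $d\le k-2$ occurs. Lemma~\ref{lem:almost-perfect} therefore applies and yields a matching $M$ in $H'$ missing a set $W$ with $|W|\le\rho_0n'\le\gamma^{2k}n$.

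\emph{Step 3 (absorption).} Since $n'$ and $|V(M)|$ are both divisible by $k$, we have $|W|\in k\mathbb N$. Lemma~\ref{lem:absorber} gives a matching covering exactly $X\cup W$, and together with $M$ this is a perfect matching of $H$.

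The main obstacle is Step 2: carrying the hypotheses through the deletion of $X$. The degree loss and the constant hierarchy must be compatible, and the non-extremality must transfer from $H$ to $H-X$. Lemma~\ref{lem:robust-density} handles the latter, but only because the loss term $k\zeta$ is small relative to $\eps$, which forces $\rho_2\le\eps/2$ in the hierarchy. A secondary, easily handled point is ensuring $|X|\equiv0\pmod k$.
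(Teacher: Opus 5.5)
Your proposal is correct and is essentially the paper's own proof: an absorbing set from Lemma~\ref{lem:absorber}, transfer of the degree bound and of non-extremality to $H-X$ (the latter via Lemma~\ref{lem:robust-density}), an almost perfect matching from Lemma~\ref{lem:almost-perfect}, and absorption of the leftover set, with your $\gamma,\rho_1,\rho_2$ playing the roles of the paper's $\theta/2,\delta,\eps/2$. Two cosmetic points: you need $\gamma^k\ll\rho_1$ but never need $\rho_1$ small in terms of $\gamma$, so the hierarchy should read $\rho_0\ll\gamma\ll\rho_1\ll\rho_2$ (as in the paper's $\theta\ll\delta$); and $|X|\in k\mathbb N$ follows at once by taking $W=\emptyset$ in Lemma~\ref{lem:absorber}, so $X$ never needs adjusting.
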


\begin{proof}
Set $r:=k-d$, $\beta:=1-1/k$, and $\Delta:=c-\frac12>0$. 
Fix a constant $C_{k,d}$ such that, whenever $0\le q\le n/2$, $N=n-q$, and $n$ is sufficiently large, the following holds.
\begin{equation}\label{eq:universal-deletion-constant}
 \binom{n-d}{r-1}
 \le \frac{C_{k,d}}n\binom{N-d}{r}.
\end{equation}
Such a constant exists because $r$ is fixed and $N\ge n/2$.


Let $1/n\ll\gamma_{\mathrm{ne}}\ll\theta\ll \delta\ll\Delta,\eps,1/k,1/d\ll1$.
We have $\delta_d(H)\ge\left(\frac12+\theta\right)
 \binom{n-d}{k-d}$ since $\gamma_{\mathrm{ne}}\ll\theta\ll\Delta$.
Applying Lemma \ref{lem:absorber} with $\theta/2$ in place of $\gamma$, we obtain a set $X\subseteq V(H)$ with $|X|\le(\theta/2)^kn$ 
and, for every $W\subseteq V(H)\setminus X$ satisfying $
 |W|\le (\theta/2)^{2k}n$
and  $|W|\in k\mathbb N$,
there is a matching in $H$ covering exactly the vertices of $X\cup W$.  

Let $N:=n-|X|$ and $H_0=H-X$, it is easy to see that $|X|\in k\mathbb N$ and thus $N\in k\mathbb N$.
As $|X|\le (\theta/2)^k n$, we have $|X|\binom{n-d-1}{k-d-1}\le \theta^k n^{k-d}$.
By $\gamma_{\mathrm{ne}}\ll\theta\ll\delta$, we obtain
 \[
 \delta_d(H_0)
 \ge(c-\gamma_{\mathrm{ne}})
 \binom{N-d}{r} - |X|\binom{n-d-1}{k-d-1}
 \ge(c-\delta)
 \binom{N-d}{r}.
 \]
Next we show that $H_0$ is not $(\eps/2)$-space-extremal.  Otherwise there would exist $B_0\subseteq V(H_0)$ such that
 $|B_0|=\beta N$, and $e_{H_0}(B_0)\le\frac\eps2\binom Nk$.
 This contradicts Lemma \ref{lem:robust-density}.


Apply Lemma \ref{lem:almost-perfect} to $H_0$ with $ (\theta/2)^{2k},\delta,\eps/2$ in place of $\rho_0,\rho_1, \rho_2$. 
We obtain a matching $M$ in $H_0$ whose uncovered set
$W:=V(H_0)\setminus V(M)$
satisfies $|W|\le( \theta/2)^{2k}N\le  (\theta/2)^{2k}n$.  
Since $N\in k\mathbb N$, we also have $|W|\in k\mathbb N$.  By Lemma \ref{lem:absorber}, there is a matching covering exactly $X\cup W$.  Together with $M$, this forms a perfect matching of $H$.
\end{proof}

\section{Proofs of the main theorems}\label{sec:main-proof}

\begin{proof}[Proof of Theorem \ref{thm:space-dominant}]
Let $r=k-d$ and $c=1-(1-1/k)^r>1/2$.  First observe that $d\le k-2$: if $d=k-1$, then $r=1$ and $c=1/k\le1/3$.

\smallskip\noindent\emph{Lower bound.}
The space construction described in the introduction gives the lower bound
$m_d(k,n)\ge s_d(k,n)+1$.
Indeed, take $A\subseteq[n]$ with $|A|=n/k-1$ and let $H_{\mathrm{sp}}$ consist of all $k$-sets meeting $A$.  Every matching has size at most $|A|<n/k$, while
$\delta_d(H_{\mathrm{sp}})=s_d(k,n)$.

\smallskip\noindent\emph{Upper bound.}
Let $\eta>0$ be supplied by Theorem \ref{thm:extremal}.  Apply Theorem \ref{thm:nonextremal} with $\eps=\eta$, obtaining a constant $\gamma_{\mathrm{ne}}>0$.  Since
\[
 \frac{s_d(k,n)}{\binom{n-d}{r}}
 =1-
 \frac{\binom{\beta n+1-d}{r}}{\binom{n-d}{r}}
 =c+O_{k,d}(n^{-1}),
\]
we have, for all sufficiently large $n$,
\[
 \delta_d(H)\ge s_d(k,n)+1
 \ge(c-\gamma_{\mathrm{ne}})
 \binom{n-d}{r}.
\]

Let $H$ be an $n$-vertex $k$-graph with $\delta_d(H)\ge s_d(k,n)+1$.
If $H$ is $\eta$-space-extremal, then Theorem \ref{thm:extremal} gives a perfect matching; if $H$ is not $\eta$-space-extremal, then Theorem \ref{thm:nonextremal} gives a perfect matching.
Therefore we obtain
 $m_d(k,n)\le s_d(k,n)+1$.
Together with the lower bound, this proves the theorem.
\end{proof}

\begin{proof}[Proof of Theorem \ref{thm:exact-threshold}]
Put $r=k-d$.  As observed in the introduction, the equality
$c_{k,d}^{\mathrm{sp}}=1/2$ is impossible for $k\ge3$.  If
$c_{k,d}^{\mathrm{sp}}>1/2$, then Theorem \ref{thm:space-dominant} and
\eqref{eq:barrier-asymptotics} give
\[
 m_d(k,n)=s_d(k,n)+1
 =\max\{s_d(k,n),\ddiv(n,k,d)\}+1
\]
for all sufficiently large $n\in k\mathbb N$.  If
$c_{k,d}^{\mathrm{sp}}<1/2$, then
Theorem \ref{cor:fractional-threshold} and the result of Treglown and
Zhao~\cite[Theorem 2]{TZ16} give
$m_d(k,n)=\ddiv(n,k,d)+1$.
In this case \eqref{eq:barrier-asymptotics} gives
$\ddiv(n,k,d)>s_d(k,n)$ for all sufficiently large $n$.  The two cases prove
the stated maximum formula.
\end{proof}

\section{Closing remarks}

The hypergraph perfect matching conjectures, both asymptotic and exact ones, received a lot of attention in the past two decades, and indeed inspired not only a long line of research on hypergraph Dirac-type problems, but also a (small) group of mathematicians working extensively on this thread of problems.
Proudly being members of them, the authors enjoyed and appreciated the time spent on the matching and other related problems.




\begingroup
\raggedright

\endgroup

\end{document}